
\documentclass[preprint,12pt]{amsart}
\textwidth=16.5cm
\oddsidemargin=0cm
\evensidemargin=0cm

\usepackage{tikz}
\usetikzlibrary{matrix}

\usepackage{amssymb}
\usepackage{verbatim}
\usepackage{array}
\usepackage{latexsym}
\usepackage{enumerate}
\usepackage{amsmath}
\usepackage{amsfonts}
\usepackage{amsthm}
\usepackage{color}
\usepackage[english]{babel}



\newtheorem{theorem}{Theorem}[section]
\newtheorem{proposition}[theorem]{Proposition}
\newtheorem{lemma}[theorem]{Lemma}
\newtheorem{corollary}[theorem]{Corollary}

\newtheorem{condition}[theorem]{Condition}
\newtheorem{question}[theorem]{Question}
{\theoremstyle{definition}
\newtheorem{definition}{Definition}}
\newtheorem{rem}[theorem]{Remark}


\def\S{\mathbf S}

\def\cC{\mathcal C}

\def\cX{\mathcal X}
\def\cY{\mathcal Y}
\def\cZ{\mathcal Z}

\def\Aut{\mbox{\rm Aut}}
\def\K{\mathbb{K}}
\def\PG{{\rm{PG}}}

\def\deg{\mbox{\rm deg}}

\def\div{\mbox{\rm div}}

\def\Aut{\mbox{\rm Aut}}

\def\Div{\mbox{\rm div}}

\def\dim{\mbox{\rm dim}}
\def\supp{\mbox{\rm Supp}}

\def\gg{\mathfrak{g}}


\newcommand{\PSL}{\mbox{\rm PSL}}
\newcommand{\PGL}{\mbox{\rm PGL}}

\newcommand{\PSU}{\mbox{\rm PSU}}
\newcommand{\PGU}{\mbox{\rm PGU}}

\newcommand{\aut}{\mbox{\rm Aut}}




\newcommand{\g}{\gamma}


\def\supp{{\rm Supp}}

\newcommand{\ha}{{\textstyle\frac{1}{2}}}




\title{Large automorphism groups of ordinary curves of even genus in odd characteristic}
\date{}

\author{Maria Montanucci}
\address{Department of Applied Mathematics and Computer Science\\Technical University of Denmark\\ Asmussens Allé \\ 2800 Kongens Lyngby (Denmark)}
\email{marimo@dtu.dk}

\author{Pietro Speziali}
\address{Instituto de Ci\^encias Matem\'aticas e de Computa\c{c}\~ao,\\ Universidade de S\~ao Paulo,\\S\~ao Carlos, SP 13560-970, (Brazil)}
\email{pietro.speziali@icmc.usp.br}

\begin{document}
\maketitle
\begin{abstract}
Let $\cX$ be a (projective, non-singular, geometrically irreducible) curve of even genus $g(\cX) \geq 2$ defined over an algebraically closed field $K$ of odd characteristic $p$. If the $p$-rank $\gamma(\cX)$ equals $g(\cX)$, then $\cX$ is \emph{ordinary}. In this paper, we deal with \emph{large} automorphism groups $G$ of ordinary curves of even genus.  We prove that $|G| < 821.37g(\cX)^{7/4}$.  
The proof of our result is based on the classification of automorphism groups of curves of even genus in positive characteristic, see \cite{giulietti-korchmaros-2017}. According to this classification,
for the exceptional cases $\aut(\cX) \cong {\rm Alt}_7$ and $\aut(\cX) \cong \rm{M}_{11}$ we show that the classical Hurwitz bound $|\aut(\cX)| < 84(g(\cX)-1)$ holds, unless $p=3$, $g(\cX)=26$ and $\aut(\cX) \cong \rm{M}_{11}$; an example for the latter case being given by the modular curve $X(11)$ in characteristic $3$.
\end{abstract}
Keywords: Algebraic curves, Automorphism groups,  $p$-rank

MSC(2010):  14H37, 14H05


\section{Introduction}\label{intr}
Let $p >0$ be a prime. By a \emph{curve} $\cX$, we mean a  projective, non-singular, geometrically irreducible curve defined over an algebraically closed field $K$ of characteristic $p$. Usually, the study of the geometry of $\cX$ is carried out through the study of its birational invariants, such as its \emph{genus} $g(\cX)$, its $p$-\emph{rank} (or \emph{Hasse-Witt invariant}) $\gamma(\cX)$, and its \emph{automorphism group} $\aut(\cX)$. The genus can be thought of as the dimension of the $K$-vector space of holomorphic differentials on $\cX$, while the $p$-rank is the dimension of the $K$-space  of holomorphic logarithmic differentials. A curve $\cX$ is \emph{ordinary} if $g(\cX) = \gamma(\cX)$. An automorphism  of $\cX$ arises by defining the concept of a field automorphism $\sigma$ of the function field $K(\cX)$ fixing the ground field $K$ elementwise. 
 From a purely geometric point of view, $\cX$ admits a non-singular model in some projective space $\PG(r,K)$ for some $r\leq g(\cX)$, and every automorphism of $\cX$ can be represented by a linear collineation in $\PGL(r+1,K)$ leaving $\cX$ invariant. 
 
 Henceforth, we shall denote the automorphism group of $\cX$ by $\aut(\cX)$. 
 By a classical result, $\aut(\cX)$ is finite whenever $g(\cX) \geq 2$.  Also, if the characteristic $p$ of $K$ divides $|\aut(\cX)|$, then the classical Hurwitz bound 
 $$
 |\aut(\cX)| \leq 84(g(\cX)-1)
 $$
 no longer holds in general. 

For ordinary curves, Nakajima in his seminal work \cite{nakajima 1987} proved that 
$$
 |\aut(\cX)| \leq 84g(\cX)(g(\cX)-1). 
$$

However, Nakajima's bound seems to be far from optimal, and a bound of size at most $g(\cX)^{8/5}$ is expected; see \cite{KR}. If $\aut(\cX)$ is solvable and $p >2$, an even tighter bound 

\begin{equation}\label{kmbound}
|\aut(\cX)| \leq 34(g(\cX)+1)^{3/2}
\end{equation}
holds; see \cite {montanucci- korchmaros}. The latter bound is sharp up to the constant term; see \cite{montanucci-korchmaros-speziali}. More recently, the bound \ref{kmbound} has been proven to hold also for $p = 2$ and $g(\cX)$ even; see \cite{montanucci-speziali-JA}. The aforementioned results depend of the fact that \emph{large} solvable subgroups of $\aut(\cX)$ have a prescribed structure; see \cite[Lemma 2.1]{montanucci- korchmaros} and \cite[Lemma 4.1]{giulietti-korchmaros-2017}.

In this paper, we deal with ordinary curves of even genus. One may ask why, as conditions on the parity of the genus may look somewhat exotic. Loosely speaking, the hypothesis that $2 | g(\cX)$ gives strong restrictions on the structure of a Sylow $2$-subgroup of $\aut(\cX)$, whence the powerful tools from finite group theory can be exploited.
 

 In Section \ref{sec:ordevengenus}, we prove that if $\cX$ is an ordinary curve of even genus and $p$ is odd, then the following bound
 $$
 |\aut(\cX)| < 821.37g(\cX)^{7/4}
 $$
 holds, see Theorem \ref{thm:sec3main}. We point out that this bound can be refined according to the structure of $\aut(\cX)$. The possibilities for $\aut(\cX)$ are listed in Remark \ref{possibilita}. According to Lemma \ref{lemA24agos2015}, two \emph{exceptional} cases then arise: $\aut(\cX) \cong \rm{Alt_7}, M_{11}$, where $\rm{M}_{11}$ is the Mathieu group of degree $11$ while $\rm{Alt_7}$ denotes the alternating group of degree $7$. In these cases, we prove that the classical Hurwitz bound for $|\Aut(\cX)|$ holds, unless $p = 3$,  $g(\cX) = 26$ and $\aut(\cX) = \rm{M}_{11}$, see Propositions \ref{alt7} and \ref{m11}.  An example of an ordinary curve of genus $26$ admitting $\rm{M}_{11}$ as an automorphism group in characteristic $3$ is given by the modular curve $X(11)$, see Section \ref{sec:modular}. 
\section{Background and Preliminary Results}\label{sec2}
Our notation and terminology are standard. Well-known references for the theory of curves and algebraic function fields are \cite{hirschfeld-korchmaros-torres2008} and \cite{stbook}. Let $\cX$ be a curve defined over an algebraically closed field $K$ of positive characteristic $p$ for some prime $p$. We denote by $K(\cX)$ the function field of $\cX$. By a point $P \in \cX$ we mean a point in a nonsingular model of $\cX$; in this way, we have a one-to-one correspondence between points of $\cX$ and places of $K(\cX)$. 

Let $\aut(\cX)$ denote the full automorphism group of $\cX$. For a subgroup $S$ of $\aut(\cX)$, we denote by $K(\cX)^S$ the fixed field of $S$. A nonsingular model $\bar{\cX}$ of  $K(\cX)^S$ is referred as the quotient curve of $\cX$ by $S$ and denoted by $\cX/S$. 
The field extension $K(\cX):K(\cX)^S$ is Galois with Galois group $S$. For a point $P \in \cX$, $S(P)$ is the orbit of $P$ under the action of $S$ on $\cX$ seen as a point-set. The orbit $S(P)$ is said to be long if $|S(P)| = |S|$, short otherwise. There is a one-to-one correspondence between short orbits and ramified points in the extension $K(\cX):K(\cX)^S$. It might happen that $S$ has no short orbits; if this is the case, the cover $\cX \rightarrow \cX/S$ (or equivalently, the extension $K(\cX):K(\cX)^S$) is unramified. 

For $P \in \cX$, the subgroup $S_P$ of $S$ consisting of all elements of $S$ fixing $P$ is called the stabilizer of $P$ in $S$.  We will often refer to $S_P$ as to the \emph{1-point stabilizer} of $S$. For a non-negative integer $i$, the $i$-th ramification group of $\cX$ at $P$ is denoted by $S_P^{(i)}$, and defined by
$$
S_P^{(i)}=\{\sigma \ | \ v_P(\sigma(t)-t)\geq i+1, \sigma \in S_P\}, 
$$
 where $t$ is a local parameter at $P$ and $v_P$ is the respective discrete valuation. Here $S_P=S_P^{(0)}$. Furthermore, $S_P^{(1)}$ is the unique Sylow $p$-subgroup of $S_P^{(0)}$, and the factor group $S_P^{(0)}/S_P^{(1)}$ is cyclic of order prime to $p$; see \cite[Theorem 11.74]{hirschfeld-korchmaros-torres2008}. In particular, if $S_P$ is a $p$-group, then $S_P=S_P^{(0)}=S_P^{(1)}$. 
 
 The following non-standard notation introduced by Nakajima in \cite{nakajima 1987} can be useful.  Let us denote the covering $\cX \rightarrow \cX/S $ by $\pi$. For a point $Q \in  \cX/S$, we define  the ramification index $e_Q$  and the different exponent $d_Q$ as follows: take $P \in \cX$ which satisfies $\pi(P) = Q$. Then, $e_Q = |S^{(0)}_P|$ and $d_Q = \sum_{i = 0}^{\infty}(|S_P^{(i)}|- 1)$. Note that, since $\pi$ is a Galois covering, $e_Q$ and $d_Q$ do not depend on the choice of $P$.

Let $g$ and $\bar{g}$ be the genus of $\cX$ and $\bar{\cX}=\cX/S$, respectively. The Hurwitz genus formula is 
\begin{equation}\label{rhg}
2g-2=|S|(2\bar{g}-2)+\sum_{P \in \cX}\sum_{i \geq 0}\big(|S_P^{(i)}|-1\big);
\end{equation}
see \cite[Theorem 11.72]{hirschfeld-korchmaros-torres2008}.
 If $\ell_1,\ldots,\ell_k$  are the sizes of the short orbits of $S$, then (\ref{rhg}) yields
\begin{equation}\label{rhso}
2g-2 \geq |S|(2\bar{g}-2)+\sum_{\nu=1}^{k} \big(|S|-\ell_\nu\big),
\end{equation}
and equality holds if $\gcd(|S_P|,p)=1$ for all $P \in \cX$; see \cite[Theorem 11.57 and Remark 11.61]{hirschfeld-korchmaros-torres2008}.

Let $\gamma = \gamma(\cX)$ denote the $p$-rank (equivalently, the Hasse-Witt invariant) of $\cX$. If $S$ is a $p$-subgroup of $\aut(\cX)$ then
the Deuring-Shafarevich formula, see \cite[Theorem 11.62]{hirschfeld-korchmaros-torres2008}, states that
\begin{equation}
    \label{eq2deuring}
\gamma-1={|S|}(\bar{\gamma}-1)+\sum_{i=1}^k (|S|-\ell_i),
    \end{equation}
where $\bar{\gamma} = \gamma(\cX/S)$ is the $p$-rank of $\cX/S$ and $\ell_1,\ldots,\ell_k$ denote the sizes of the short orbits of $S$. Both the Hurwitz and Deuring-Shafarevich formulas hold true for rational and elliptic curves provided that $G$ is a finite subgroup.

A subgroup of $\aut(\cX)$ is a \emph{$p'$-group} (or \emph{a prime to $p$} group) if its order is prime to $p$. 
A subgroup $G$ of $\aut(\cX)$ is \emph{tame} if the $1$-point stabilizer of any point in $G$ is $p'$-group. Otherwise, $G$ is \emph{non-tame} (or \emph{wild}). 
If $G$ is tame, then the classical Hurwitz bound $|G|\leq 84(g(\cX)-1)$ holds, but for non-tame groups this is far from being true. 

In this paper we deal with \emph{ordinary curves}, that is, curves for which equality $g(\cX) = \gamma(\cX)$ holds. We now collect some results regarding automorphism groups of ordinary curves. 
\begin{theorem}[\cite{nakajima 1987}, Theorem 3] \label{thm84g2}
Let $\cX$ be an ordinary curve with $g(\cX) \geq 2$. Then the following inequality 
\begin{equation}
|\aut(\cX)| \leq 84(g(\cX)-1)g(\cX)
\end{equation}
holds.
\end{theorem}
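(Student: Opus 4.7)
The plan is to show that the ordinarity hypothesis forces the higher ramification groups of $\aut(\cX)$ to collapse, reducing the Riemann-Hurwitz formula to a mild generalization of the classical Hurwitz inequality, and then to extract the extra factor of $g$ from a separate linear bound on the wild inertia.

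The first step would be the structural lemma that $G_P^{(2)}=1$ for every $P\in\cX$ and every subgroup $G\leq\aut(\cX)$. This is obtained by applying the Deuring-Shafarevich formula to the Sylow $p$-subgroup $G_P^{(1)}$ of the stabilizer: a non-trivial $G_P^{(2)}$ would inflate the contribution of $P$ to the different enough to force $\gamma(\cX)<g(\cX)$, contradicting ordinarity. As a consequence, at each branch point $Q$ of the cover $\cX\to\cX/G$ the different exponent simplifies to $d_Q=(e_Q-1)+(e_Q^{(1)}-1)\leq 2(e_Q-1)$, and the Riemann-Hurwitz formula specialises to
\begin{equation*}
2g-2 \;=\; |G|(2\bar g - 2) \;+\; \sum_Q \ell_Q\bigl((e_Q-1) + (e_Q^{(1)}-1)\bigr),
\end{equation*}
where $\bar g$ is the genus of $\cX/G$ and $\ell_Q=|G|/e_Q$.

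The next step is the classical signature analysis on this modified formula. When $\bar g\geq 2$, dropping the non-negative ramification sum gives $|G|\leq g-1$ immediately; when $\bar g=1$, standard casework on the short-orbit pattern yields $|G|\leq 4(g-1)$. The decisive case is $\bar g=0$, in which only finitely many signatures $(e_1,\ldots,e_k)$ are compatible with the formula, and for each of these the Hurwitz constant is bounded by $84$ provided one can control the wild ramification indices. To do so, I would use that any quotient of an ordinary curve by a $p$-group is again ordinary, apply Deuring-Shafarevich to $G_P^{(1)}$ acting on $\cX$, and conclude a linear bound of the form $|G_P^{(1)}|\leq g$. Substituting this into the Hurwitz-type estimate then produces the advertised $|\aut(\cX)|\leq 84\,g(g-1)$.

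The main obstacle I anticipate is the bookkeeping in the $\bar g=0$ case with few short orbits or small ramification indices, where the wild part $e_Q^{(1)}$ and the tame cyclic quotient $G_P^{(0)}/G_P^{(1)}$ must be tracked separately, and where several degenerate signatures (such as those with only two short orbits) need to be excluded directly via the Deuring-Shafarevich constraint on a Sylow $p$-subgroup of $G$ acting on the ordinary curve $\cX$. It is precisely this interplay between the tame and wild parts of the point stabilizers that is responsible for the emergence of the factor $g$ in the final bound, alongside the classical Hurwitz factor $84$.
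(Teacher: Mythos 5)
The paper does not prove this statement: it is imported verbatim from Nakajima's Theorem~3, so the only meaningful benchmark is Nakajima's own argument, and at the level of strategy your outline reconstructs it faithfully — the vanishing of $G_P^{(2)}$ for ordinary curves (Theorem~\ref{2i}), the resulting simplification $d_Q=(e_Q-1)+(e_Q^{(1)}-1)$ of the different, the reduction to a rational quotient with a short list of branch signatures (Theorem~\ref{123}), and Deuring--Shafarevich control of the wild inertia. Your cases $\bar g\geq 2$ and $\bar g=1$ are correct as stated. One small inaccuracy: the linear bound on the wild inertia is not $|G_P^{(1)}|\leq g$ but $|G_P^{(1)}|\leq \tfrac{p}{p-1}\gamma(\cX)$ (apply Deuring--Shafarevich to $Q=G_P^{(1)}$: if $\cX/Q$ has positive $p$-rank one gets $|Q|\leq\gamma$, otherwise ordinarity forces a second short orbit of length at most $|Q|/p$, giving $|Q|\leq\tfrac{p}{p-1}\gamma$); this only perturbs constants.

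The genuine gap is in the decisive case $\bar g=0$ with exactly two branch points, one wild and one tame (case (iii) of Theorem~\ref{123}). Writing $e_{Q_1}=q_1E_1$ with $q_1=|G_P^{(1)}|$ and $E_1=[G_P^{(0)}:G_P^{(1)}]$, the Riemann--Hurwitz formula gives
\begin{equation*}
\frac{2g-2}{|G|}=\frac{q_1-2}{q_1E_1}-\frac{1}{e_{Q_2}},
\end{equation*}
and the right-hand side, though positive, can a priori be as small as $1/(q_1E_1e_{Q_2})$. Substituting the separate linear bounds $q_1\leq 2g$, $E_1\leq q_1-1$ and $e_{Q_2}\leq 4g+4$ therefore yields only $|G|=O\bigl(g^{3}(g-1)\bigr)$; the step you describe as ``substituting this into the Hurwitz-type estimate'' does not close. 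What is actually needed is the finer structural input that Nakajima extracts from $G_P^{(2)}=1$ — his Proposition~1, that the tame quotient $G_P^{(0)}/G_P^{(1)}$ acts faithfully without nonzero fixed vectors on the elementary abelian group $G_P^{(1)}$, whence $E_1\leq q_1-1$ and the wild stabilizer has a very rigid shape — combined with his Lemmas~1 and~2, which restrict the admissible pairs $(e_{Q_1},e_{Q_2})$ enough to give a lower bound of the form $c/g$ for the bracket above. That arithmetic minimization is precisely where both the constant $84$ and the extra factor $g$ arise, and it is the part of the argument your outline leaves as ``bookkeeping''.
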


\begin{theorem}[\cite{nakajima 1987}, Theorem 2(i)]\label{2i}
Let $\cX$ be ordinary, and let $G$ be a finite subgroup of $\aut(\cX)$. Then for every point $P$ of $\cX$,  $G_P^{(2)}$ is trivial. 
\end{theorem}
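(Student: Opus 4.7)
The plan is to reduce the statement to an assertion about a $p$-subgroup $S$ of $\aut(\cX)$, then compare the Hurwitz genus formula with the Deuring--Shafarevich formula for $S$. The natural choice of $S$ is $G_P^{(1)}$, the Sylow $p$-subgroup of $G_P$, because $G_P^{(2)}\leq G_P^{(1)}$ and, since the higher ramification filtration depends only on valuations, the second ramification group of $G_P^{(1)}$ at $P$ coincides with $G_P^{(2)}$.

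First I would fix an arbitrary $p$-subgroup $S\leq \aut(\cX)$ and record the key observation: since $S_Q$ is itself a $p$-group for every $Q\in\cX$, the fact recalled in Section \ref{sec2} gives $S_Q^{(0)}=S_Q^{(1)}$, and hence
$$\sum_{i\geq 0}\bigl(|S_Q^{(i)}|-1\bigr)\;\geq\;2\bigl(|S_Q|-1\bigr),$$
with equality if and only if $S_Q^{(2)}$ is trivial. Summing the Hurwitz formula (\ref{rhg}) over the short orbits of $S$, whose sizes I call $\ell_1,\dots,\ell_k$, I obtain
$$2g-2\;\geq\;|S|(2\bar g-2)+2\sum_{\nu=1}^{k}\bigl(|S|-\ell_\nu\bigr),$$
where $\bar g$ is the genus of $\cX/S$, and this inequality is an equality precisely when $S_Q^{(2)}=\{1\}$ for every $Q\in\cX$.

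Next I would use the Deuring--Shafarevich formula (\ref{eq2deuring}), which is available since $S$ is a $p$-group, in the form $\sum(|S|-\ell_\nu)=(\gamma-1)-|S|(\bar\gamma-1)$. Substituting and simplifying condenses everything into
$$g-\gamma\;\geq\;|S|\bigl(\bar g-\bar\gamma\bigr).$$
Since $\cX$ is ordinary the left-hand side is zero, while the right-hand side is non-negative because $\bar g\geq\bar\gamma$ holds for every curve. Therefore both sides vanish, $\bar g=\bar\gamma$ (so the quotient is also ordinary), and the equality case of the Hurwitz estimate above forces $S_Q^{(2)}=\{1\}$ for every $Q\in\cX$. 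Specialising to $S=G_P^{(1)}$ and $Q=P$ yields $G_P^{(2)}=\{1\}$.

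The main obstacle, as I see it, is spotting the correct factor of $2$ in the Hurwitz estimate for $p$-groups: the different exponent satisfies $d_Q\geq 2(e_Q-1)$ per ramified orbit, which \emph{exactly} matches twice the Deuring--Shafarevich contribution. Any weaker comparison would leave slack in the chain $g-\gamma\geq|S|(\bar g-\bar\gamma)$, the ordinary hypothesis $g=\gamma$ would not force equality, and a non-trivial $G_P^{(2)}$ would not be excluded.
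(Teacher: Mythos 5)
Your argument is correct and is essentially Nakajima's original proof of this statement (which the paper only cites, without reproving): pass to the wild inertia group $S=G_P^{(1)}$, use $S_Q^{(0)}=S_Q^{(1)}$ to get the different bound $d_Q\geq 2(e_Q-1)$, and play the Hurwitz formula off against the Deuring--Shafarevich formula to obtain $g-\gamma\geq |S|(\bar g-\bar\gamma)$, which the ordinarity hypothesis forces into an equality. All the individual steps check out, including the compatibility of the lower-numbering filtration with subgroups that identifies $(G_P^{(1)})^{(2)}$ with $G_P^{(2)}$.
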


We end this section recalling some definitions from Group Theory. 

\begin{definition}
A subgroup $H$ of a group $G$ is said to be a \emph{minimal normal subgroup} if $H$ is normal in $G$ and any normal subgroup of $G$ properly contained in $H$ is trivial.
If $G$ is solvable then a minimal normal subgroup $H$ of $G$ is elementary abelian, see \cite[Theorem 5.46]{machi}.
\end{definition}

\begin{definition}
For a group $G$, the \emph{odd core} $O(G)$ is its maximal normal subgroup of odd order. A group is \emph{odd core-free} if $O(G)$ is trivial. 
\end{definition}

\section{On ordinary curves of even genus in odd characteristic}\label{sec:ordevengenus}
In this section, $K$ is an algebraically closed field of odd characteristic $p$, $\cX$ is an ordinary curve 
 $g=g(\cX) \geq 2$, and $G$ is a subgroup of $\aut(\cX)$. 

Our aim here is to show that $|G| < 821.37g(\cX)^{7/4}$.


First, we give some background. We start with  bounds as well as conditions  on the structure of the Sylow $p$-subgroups of solvable automorphism groups of ordinary curves. 


\begin{theorem}\emph{\cite[Theorem 1.1]{montanucci- korchmaros}} \label{solvable} 
\label{the1zi} Let $\cX$ be an ordinary  curve of genus $g \geq 2$ defined over $K$. 
If $G$ is a solvable subgroup of $\aut(\cX)$  or $G$ contains an elementary abelian minimal normal subgroup, then $|G|\leq 34(g+1)^{3/2}<68 \sqrt{2}g^{3/2}$.
\end{theorem}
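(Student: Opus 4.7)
The plan is to exploit an elementary abelian minimal normal subgroup $N \trianglelefteq G$, which exists either directly by hypothesis, or, when $G$ is solvable, by the standard fact that minimal normal subgroups of solvable groups are elementary abelian. The strategy is to bound $|N|$ by a ramification formula, bound $|G/N|$ by induction or via Nakajima's Theorem~\ref{thm84g2} applied to the quotient curve $\bar{\cX} = \cX/N$, and then combine and optimise the product.

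In the \emph{tame} case $\gcd(|N|, p) = 1$, the equality version of (\ref{rhso}) yields
\[
2g - 2 \;=\; |N|(2\bar g - 2) + \sum_{\nu=1}^k \bigl(|N| - \ell_\nu\bigr),
\]
where $\bar g = g(\bar{\cX})$ and the $\ell_\nu$ are the short orbit sizes of $N$. Since $N$ is elementary abelian and acts faithfully, every nontrivial point stabiliser is a proper subgroup of $N$, and a standard Hurwitz-type argument caps the number of short orbits with very small size, yielding $|N| \le c_1\,g$ with an explicit constant. In the \emph{wild} case $N$ is an elementary abelian $p$-group of order $p^k$, and the Deuring--Shafarevich formula (\ref{eq2deuring}) together with the ordinarity equality $\gamma(\cX) = g(\cX)$ gives
\[
g - 1 \;=\; p^k(\bar\gamma - 1) + \sum_{\nu=1}^k \bigl(p^k - \ell_\nu\bigr).
\]
Theorem~\ref{2i} forces $N_P^{(2)} = 1$ at every ramified point, which implies that $\bar{\cX}$ is again ordinary (so $\bar\gamma = \bar g$) and simultaneously limits the ramification data enough to produce $|N| \le c_2\,g$.

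For the quotient $\bar G = G/N$ acting on $\bar{\cX}$: when $\bar g \ge 2$, ordinarity of $\bar{\cX}$ allows the application of Theorem~\ref{thm84g2} to give $|\bar G| \le 84\,\bar g(\bar g - 1)$; the boundary cases $\bar g \in \{0,1\}$ are treated via the classical classifications of finite subgroups of $\aut(\mathbb{P}^1) = \PGL(2,K)$ and of automorphism groups of elliptic curves, or alternatively by iterating the argument on a minimal normal subgroup of $\bar G$ (which is again solvable). Combining $|G| = |N|\cdot|\bar G|$ with the linear bound on $|N|$ and the quadratic bound on $|\bar G|$ and optimising the trade-off yields the $g^{3/2}$ growth; the extremal configuration is precisely when $|N|$ and $\bar g$ are both of order $\sqrt{g}$, which forces the exponent $3/2$.

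The main obstacles are twofold. First, tracking constants with sufficient precision to obtain the sharp value $34$ requires careful bookkeeping of every term in the Deuring--Shafarevich and Hurwitz formulas and the exclusion of borderline configurations by the ordinarity assumption. Second, the rigorous handling of the small-$\bar g$ boundary cases demands invoking the explicit classification of solvable subgroups of $\PGL(2,K)$ (affine and dihedral-type groups of controlled order) when $\bar g = 0$, and the finite subgroups of the automorphism group of an elliptic curve when $\bar g = 1$, before the inductive mechanism can close the argument.
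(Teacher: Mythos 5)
This theorem is not proved in the paper at all: it is imported verbatim from \cite[Theorem 1.1]{montanucci- korchmaros}, so the only fair comparison is with the strategy of that proof, whose key ingredients are visible in this paper as Lemma \ref{22dic2015}, Lemma \ref{terribile_ord} and Proposition \ref{complemento}. Measured against that, your outline has a genuine gap at its central step. You propose to bound $|N|$ linearly in $g$ and $|G/N|$ quadratically in $\bar g$ via Theorem \ref{thm84g2}, and then ``optimise the trade-off.'' But Hurwitz gives $\bar g-1\le (g-1)/|N|$, so the product is $|G|\le |N|\cdot 84\,\bar g(\bar g-1)\approx 84(g-1)^2/|N|$, and since a minimal normal subgroup can be as small as $\mathbb{Z}/2\mathbb{Z}$ you only recover Nakajima's quadratic bound, not $g^{3/2}$. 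The extremal configuration you describe (both $|N|$ and $\bar g$ of order $\sqrt g$) would require a \emph{lower} bound $|N|\gtrsim\sqrt g$, which nothing in your argument supplies. The inductive variant ($|G/N|\le 34(\bar g+1)^{3/2}$) would close numerically when $\bar g\ge 2$, but it collapses exactly where all the difficulty lives.

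That difficulty is your second gap: the base case $\bar g=0$. Over an algebraically closed field the finite solvable subgroups of $\PGL(2,K)$ are \emph{not} of controlled order --- the affine groups $E_{p^k}\rtimes C_{p^k-1}$ occur for every $k$ --- so citing the classification of subgroups of $\PGL(2,K)$ cannot bound $|G/N|$. The actual proof handles precisely this situation by structural means on $\cX$ itself: if $|G|$ exceeds a linear function of $g$, then (Lemma \ref{22dic2015}) $G$ has a normal $p$-subgroup $Q$ with $\cX/Q$ rational and a cyclic complement $U$, and ordinarity is then used twice --- via $G_P^{(2)}=\{1\}$ (Theorem \ref{2i}) to force $Q$ abelian with $|Q|\le 4g+4$, and via Nakajima's Proposition 1 to force $|U|\le |Q_P|-1\le\sqrt{4g+4}$ --- yielding $|G|=|Q|\,|U|=O\bigl(g^{3/2}\bigr)$ directly, with no induction. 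Your sketch contains neither the reduction to this $Q\rtimes U$ configuration nor the mechanism bounding the complement, and these are the essential content of the theorem.
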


The following is a consequence of the proofs of  \cite[Proposition 3.1 and Proposition 3.2]{montanucci- korchmaros}. 

\begin{theorem} \label{sylow} 
Let $\cX$ be an ordinary curve of genus $g \geq 2$ defined over $K$. If $G$ is a solvable subgroup of $\aut(\cX)$  or $G$ contains an elementary abelian minimal normal subgroup, then a Sylow $p$-subgroup of $G$ is also elementary abelian.
\end{theorem}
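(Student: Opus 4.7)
The plan is to reduce the statement to showing that a Sylow $p$-subgroup $S$ of $G$ admits at least one fixed point on $\cX$. Indeed, once $S$ fixes a point $P$, the conclusion follows rapidly: since $S$ is a $p$-group and $G_P^{(0)}/G_P^{(1)}$ has order coprime to $p$ (see \cite[Theorem 11.74]{hirschfeld-korchmaros-torres2008}), one has $S\leq G_P^{(1)}$. Nakajima's Theorem \ref{2i} gives $G_P^{(2)}=\{1\}$, so the classical elementary abelianness of the quotient $G_P^{(1)}/G_P^{(2)}$ transfers to $G_P^{(1)}$ itself, and hence to its subgroup $S$.

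To locate such a fixed point I would follow the line of reasoning of \cite[Proposition 3.1 and Proposition 3.2]{montanucci- korchmaros}. Let $N$ be a minimal normal subgroup of $G$ that is elementary abelian; this exists under either hypothesis (it is automatic when $G$ is solvable). Write $|N|=\ell^{a}$ for some prime $\ell$. If $\ell=p$, then $N\leq S$, and since $N$ is normal in $G$ the fixed-point set ${\rm Fix}(N)\subseteq \cX$ is $G$-invariant. Applying the Deuring-Shafarevich formula \eqref{eq2deuring} to $N$, combined with the fact that $\cX$ is ordinary, forces ${\rm Fix}(N)$ to be nonempty; then $S/N$ acts on ${\rm Fix}(N)$ as a $p$-group on a set whose size is controlled by the ramification data at each fixed point of $N$, and a counting argument supplemented by Theorem \ref{2i} produces a fixed point of $S$. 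If instead $\ell\neq p$, one passes to the quotient curve $\cX/N$, which remains ordinary, and proceeds by an inductive argument on $|G|$; the induced Sylow $p$-subgroup of $G/N$ is isomorphic to $S$, so lifting the elementary abelian structure back to $G$ yields the claim.

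The main obstacle I expect lies in the case $\ell=p$: establishing that a fixed point of $N$ is indeed a fixed point of the entire Sylow $p$-subgroup $S$, rather than only of $N$. Carrying this out requires a careful orbit analysis of $S$ on ${\rm Fix}(N)$ combined with repeated use of the triviality of second ramification groups on ordinary curves, together with the structural constraints derived from the normality of $N$ in $G$. This is precisely the technical content encoded in the proofs of \cite[Proposition 3.1 and Proposition 3.2]{montanucci- korchmaros}, from which the elementary abelian structure of $S$ can be extracted without further explicit computation.
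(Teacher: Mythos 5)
Your opening reduction is sound and is indeed the mechanism behind the cited result: if a Sylow $p$-subgroup $S$ fixes a point $P$, then $S\leq G_P^{(1)}$ because $G_P^{(0)}/G_P^{(1)}$ has order prime to $p$, and since $G_P^{(2)}$ is trivial by Theorem \ref{2i} while $G_P^{(1)}/G_P^{(2)}$ is elementary abelian, $S$ is elementary abelian. The genuine gap is in how you produce the fixed point. The assertion that the Deuring--Shafarevich formula \eqref{eq2deuring} together with ordinarity forces ${\rm Fix}(N)\neq\emptyset$ for a normal elementary abelian $p$-subgroup $N$ is false: if $N$ has no short orbits, the cover $\cX\to\cX/N$ is unramified, and then \eqref{eq2deuring} and the Hurwitz formula give the parallel relations $\gamma-1=|N|(\bar{\gamma}-1)$ and $g-1=|N|(\bar{g}-1)$, so $\cX/N$ is again ordinary and no contradiction arises. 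Worse, this shows the statement cannot be proved as literally written: any ordinary curve of genus at least $2$ has positive $p$-rank and therefore admits an unramified Galois cover with group $\mathbb{Z}/p^2\mathbb{Z}$; the covering curve is ordinary of genus at least $2$, and this solvable cyclic group of order $p^2$ is its own Sylow $p$-subgroup, is not elementary abelian, and fixes no point. What is missing from your argument is the largeness hypothesis on $|G|$ under which Propositions 3.1 and 3.2 of \cite{montanucci- korchmaros} actually operate (compare Lemmas \ref{22dic2015} and \ref{terribile_ord}, where it is the bound $|G|\geq 30(g-1)$, respectively $|G|>12(g-1)$, that forces the normal $p$-subgroup to have short orbits and hence fixed points). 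Every application of the theorem in this paper does occur under such a bound, so the hypothesis must be invoked, not bypassed.

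Two further points. First, the step you yourself identify as the main obstacle --- promoting a fixed point of $N$ to a fixed point of the whole Sylow $p$-subgroup $S$ --- is precisely the technical content of the result; delegating it wholesale to the reference means the proposal contains no proof of the part that carries the difficulty. Second, the $\ell\neq p$ branch needs repair: $G/N$ may not act faithfully on $\cX/N$, the genus of $\cX/N$ may drop below $2$ so that the inductive hypothesis is unavailable, and when $G$ is merely assumed to contain an elementary abelian minimal normal subgroup, $G/N$ need not inherit that property. (The one claim in that branch that is unproblematic is that $\cX/N$ remains ordinary, since $J(\cX/N)$ is an isogeny factor of $J(\cX)$ and the $p$-rank is additive over products and invariant under isogeny.)
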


%

\begin{lemma}\emph{ \cite[Lemma 4.1]{giulietti-korchmaros-2017}}
\label{22dic2015} Let $G$ be a solvable automorphism group of an algebraic curve $\cX$ of genus $g \ge 2$ containing a normal subgroup $Q$ of odd order $d^k$ with $k \geq 1$ such that $|Q|$ and $[G:Q]$ are coprime. Suppose that a complement $U$ of $Q$ in $G$ is abelian, and that $N_G(U)\cap Q=\{1\}$.  If
\begin{equation}
\label{eq22bisdic2015}
{\mbox{$|G|\geq 30(g-1)$}},
\end{equation}
then  $d=p$ and $U$ is cyclic. Moreover, the quotient curve $\bar{\cX}=\cX/Q$ is rational and either
\begin{itemize}
\item[\rm(i)]  $\cX$ has positive $p$-rank, $Q$ has exactly two (non-tame) short orbits, and they are also the only short orbits of $G$; or
\item[\rm(ii)] $\cX$ has zero $p$-rank and $G$ fixes a point.
\end{itemize}
\end{lemma}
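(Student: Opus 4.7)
The plan is to pass to the quotient curve $\bar{\cX}:=\cX/Q$, to analyze the induced action of the abelian complement $U$, and to exploit the group-theoretic consequences of $N_G(U)\cap Q=\{1\}$. I would first show that $\bar{\cX}$ is rational. The Riemann--Hurwitz formula applied to $\cX\to\bar{\cX}$ gives $2g-2=|Q|(2\bar{g}-2)+D$ with $D\geq 0$. Since $Q\cap U=\{1\}$, the group $U$ acts faithfully and abelianly on $\bar{\cX}$, so for $\bar{g}\geq 2$ the size $|U|$ is bounded linearly in $\bar{g}$ by the classical bound on abelian automorphism groups of curves. Combining, $|G|=|Q|\,|U|$ is then bounded above by a small constant multiple of $g-1$, contradicting $|G|\geq 30(g-1)$; a sharper count handles $\bar{g}=1$. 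Hence $\bar{\cX}\cong\mathbb{P}^1$.

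Next I would exploit $N_G(U)\cap Q=\{1\}$: by coprime action of $U$ on $Q$, any $q\in Q$ centralizing $U$ normalizes $U$ and thus lies in $N_G(U)\cap Q=\{1\}$, so $C_Q(U)=\{1\}$. Since $U$ is an abelian subgroup of $\PGL(2,K)$, it is either cyclic or an elementary abelian $p$-group. In the latter case $U$ fixes a unique point of $\mathbb{P}^1$; lifting to $\cX$, the fibre over this point is a $U$-invariant $Q$-orbit on which $U$ has a fixed point by coprime-order considerations, giving via orbit--stabilizer a non-trivial element of $Q$ centralizing $U$, a contradiction. Therefore $U$ is cyclic.

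I would then show $d=p$ by contradiction. If $d\neq p$, then $Q$ is tame and Riemann--Hurwitz for $\cX\to\mathbb{P}^1$ takes the equality form $2g-2=-2|Q|+\sum_\nu(|Q|-\ell_\nu)$ with each short-orbit length $\ell_\nu\leq|Q|/d$. A tame action on a rational quotient has at most three short orbits, producing $g\leq C|Q|$ for an absolute constant $C$; combined with $|U|\leq 2g+2$ from $U$ cyclic in $\PGL(2,K)$, the product $|G|=|Q|\,|U|$ cannot reach $30(g-1)$, a contradiction. Hence $d=p$.

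With $d=p$, the Deuring--Shafarevich formula gives $\gamma(\cX)-1=-|Q|+\sum_\nu(|Q|-\ell_\nu)$, and the cyclic group $U\leq\PGL(2,K)$ fixes at most two points of $\mathbb{P}^1$ while permuting the images of short $Q$-orbits. If $\gamma(\cX)>0$, this identity together with the cyclic action of $U$ forces exactly two short orbits of $Q$, both non-tame and $U$-invariant, and they coincide with the short orbits of $G$ because any further $G$-short orbit would yield an element of $Q$ centralizing $U$, contradicting $C_Q(U)=\{1\}$; this is case (i). If $\gamma(\cX)=0$ the identity collapses to a single totally ramified fibre of $\cX\to\bar{\cX}$ over a $U$-fixed point of $\mathbb{P}^1$, giving a $G$-fixed point on $\cX$, yielding case (ii). The main obstacle I anticipate is this final orbit count: translating $C_Q(U)=\{1\}$ into the sharp dichotomy between (i) and (ii) requires careful juggling of Deuring--Shafarevich, the fixed-point structure of cyclic subgroups of $\PGL(2,K)$, and the bound $|G|\geq 30(g-1)$.
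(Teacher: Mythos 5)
This lemma is quoted in the paper from \cite[Lemma 4.1]{giulietti-korchmaros-2017} without proof, so there is no in-paper argument to compare against; I can only assess your proposal on its own terms. Its overall architecture --- pass to $\bar{\cX}=\cX/Q$, prove rationality, classify the faithful abelian action of $U\cong G/Q$ on $\mathbb{P}^1$, then combine Deuring--Shafarevich with $C_Q(U)=\{1\}$ for the final dichotomy --- is the natural one, and the reduction $N_Q(U)=C_Q(U)=\{1\}$ via coprime action is correct. But several individual steps do not go through as written.

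First, your classification of abelian subgroups of $\PGL(2,K)$ is incomplete: for $p$ odd the Klein four-group, generated by $x\mapsto -x$ and $x\mapsto 1/x$, is abelian, non-cyclic and of order prime to $p$. Since $|U|=[G:Q]$ is only required to be coprime to the odd number $d^k$, $U$ may well have even order, so the dichotomy ``cyclic or elementary abelian $p$-group'' omits a case that must be excluded by a separate argument. Second, your exclusion of the elementary abelian $p$-group case produces no contradiction: if the fibre over the unique $U$-fixed point of $\mathbb{P}^1$ is a long $Q$-orbit through a $U$-fixed point $P$, then identifying the orbit with $Q$ via $q\mapsto q(P)$ turns the $U$-action into conjugation, so the $U$-fixed points are in bijection with $C_Q(U)$; the hypothesis $C_Q(U)=\{1\}$ then predicts exactly one fixed point, which is perfectly consistent. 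Third, and most seriously, the step $d=p$ fails: a tame action with rational quotient need not have at most three short orbits (a prime-to-$p$ cyclic group can have arbitrarily many fixed points), and even granting three short orbits with indices powers of $d\ge 3$, Riemann--Hurwitz only gives $|Q|\le 9(g-1)$ (signature $(d,d,d^2)$); combining this, or the lower bound $|Q|\ge g/C$, with $|U|\le 2g+2$ yields $|G|=|Q|\,|U|$ of order $g^2$, which is nowhere near a contradiction with $|G|\ge 30(g-1)$. The genuine argument must exploit the joint action of $G=Q\rtimes U$ (for instance that $U=N_G(U)$ has exactly $|Q|$ conjugates, which rigidly constrains the point stabilizers), not the product of two independent bounds. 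Finally, the $\bar g=1$ case of rationality is asserted rather than proved and requires its own Riemann--Hurwitz analysis of $U$ acting on an elliptic curve.
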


Henceforth, by $\cX$ we shall denote an ordinary curve of even genus.

The latter condition implies that the structure of $\aut(\cX)$ can be described in terms of its minimal normal subgroups and their Sylow $2$-subgroups. The following lemma deals with the non-solvable case. Here, a semi-dihedral group $SD_{2^h}$ of order $2^h$ is the finite group given by the following presentation: 
$$
SD_{2^h} = \langle a, x | a^{2^{h-1}} = x^2 = id, \  xax = a^{2^{h-2}-1} \rangle. 
$$

\begin{lemma}\emph{\cite[Lemma 6.7 and Lemma 6.11]{giulietti-korchmaros-2017}}
\label{lemA24agos2015} Let $G$ be  an automorphism group of a curve of even genus in characteristic $p >2$. If $G$ is odd core-free and it has a non-abelian minimal normal subgroup $N$ then $N$ is a simple group and one the following cases occurs for some prime power $q=d^k$ with $k$ odd.
\begin{itemize}
\item[\rm(i)] $N\cong \PSL(2,q)$ with $q\geq 5$ odd, a Sylow $2$-subgroup of $N$ is dihedral, and \ \\ $G \leq \Aut(N) \cong {\rm{P}}\Gamma{\rm {L}}(2,q)$. Hence, $\PSL(2,q)\le G \le {\rm{P}}\Gamma{\rm {L}}(2,q)$ with $q\ge 5$ odd; 

\item[\rm(ii)] $N\cong \PSL(3,q)$ with $q\equiv 3 \pmod 4$, a Sylow $2$-subgroup of $N$ is semidihedral, and \ \\ $G \leq \Aut(N) \cong {\rm{P}}\Gamma{\rm {L}}(3,q)$. Hence, $\PSL(3,q)\le G \le {\rm{P}}\Gamma {\rm{L}}(3,q)$ with $q\equiv 3 \pmod 4$;

\item[\rm(iii)] $N\cong \PSU(3,q)$ with $q\equiv 1 \pmod 4$, and a Sylow $2$-subgroup of $N$ is semidihedral, and \ \\ 
$G \leq \Aut(N) \cong {\rm{P}}\Gamma{\rm {U}}(3,q)$. Hence $\PSU(3,q)\le G \le {\rm{P}}\Gamma {\rm{U}}(3,q)$ with $q\equiv 1 \pmod 4$;

\item[\rm(iv)] $N\cong {\rm{Alt}_7}$, a Sylow $2$-subgroup of $N$ is dihedral, and $G=N \cong \Aut(N) \cong {\rm{Alt}_7}$;
\item[\rm(v)]  $N\cong \rm{M}_{11}$, the Mathieu group on $11$ letters, a Sylow $2$-subgroup of $N$ is semidihedral, and $G=N  \cong \Aut(N) \cong \rm{M}_{11}$.
\end{itemize}
\end{lemma}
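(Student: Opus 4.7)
The plan is to combine the even-genus restriction on $\cX$ with two classical classification theorems for finite simple groups. First, since $N$ is a non-abelian minimal normal subgroup of $G$, it decomposes as $N=T_1\times\cdots\times T_k$ with the $T_i$ pairwise isomorphic to a common non-abelian simple group $T$. The odd core-free hypothesis together with $N\ne 1$ forces $|N|$ to be even, whence $|T|$ is even. The heart of the proof is the following claim: every elementary abelian $2$-subgroup of $\aut(\cX)$ has rank at most $2$. Because $p$ is odd, every $2$-subgroup of $\aut(\cX)$ is tame, so for any involution $\sigma\in\aut(\cX)$ the Riemann-Hurwitz formula gives $g(\cX)-1 = 2(g(\cX/\sigma)-1) + r_\sigma/2$, where $r_\sigma$ is the number of fixed points of $\sigma$; the parity $g(\cX)\equiv 0\pmod 2$ then forces $r_\sigma\equiv 2\pmod 4$. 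For an elementary abelian $V\le\aut(\cX)$ of rank $r$, an Accola-type formula expresses $g(\cX)$ as an integer combination of the genera $g(\cX/W)$ over subgroups $W\le V$; reducing this relation modulo a suitable power of $2$ and feeding in the parity congruence at each level produces an incompatibility whenever $r\ge 3$.

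With the $2$-rank bound in hand, a Sylow $2$-subgroup of $T$ is cyclic, dihedral, generalized quaternion, or semidihedral. In a non-abelian simple group, cyclic Sylow $2$-subgroups cannot occur (Burnside's normal $2$-complement theorem combined with Feit-Thompson), and generalized quaternion Sylow $2$-subgroups are excluded by the Brauer-Suzuki theorem (the unique involution would be central modulo the odd core, contradicting simplicity). In the dihedral case, Gorenstein-Walter gives $T\cong\PSL(2,q)$ with $q\ge 5$ odd or $T\cong\Alt_7$; in the semidihedral case, Alperin-Brauer-Gorenstein gives $T\cong\PSL(3,q)$ with $q\equiv 3\pmod 4$, $T\cong\PSU(3,q)$ with $q\equiv 1\pmod 4$, or $T\cong\mathrm{M}_{11}$. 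Each such $T$ has $2$-rank exactly $2$, so $N=T^k$ has $2$-rank $2k$; the bound of the previous paragraph therefore forces $k=1$ and $N=T$. It remains to show $G\le\aut(N)$: the centralizer $C_G(N)$ is normal in $G$ and meets $N$ in $Z(N)=1$, and any non-trivial minimal normal subgroup of $G$ contained in $C_G(N)$ would (being non-abelian by odd core-freeness) produce a second simple factor $T'$ commuting with $T$, generating a copy of $(\Z/2)^4$ in $\aut(\cX)$ and contradicting the $2$-rank bound. Hence $C_G(N)=1$, $G$ embeds in $\aut(N)$, and the explicit description of $\aut(N)$ in each family (for instance $\aut(\PSL(2,q))\cong\mathrm{P}\Gamma\mathrm{L}(2,q)$) yields the containments $N\le G\le\aut(N)$ asserted in the statement.

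The main obstacle is establishing the $2$-rank bound. This requires a careful combinatorial analysis of the fixed-point divisors of iterated $2$-covers of $\cX$ and is the only step where the even genus hypothesis enters in an essential way; once it is available, the classification follows from a dictionary lookup in Gorenstein-Walter and Alperin-Brauer-Gorenstein, together with the straightforward centralizer argument used to pass from $N$ to $G$.
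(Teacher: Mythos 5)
Your proposal has the right overall architecture (even-genus constraint on $2$-subgroups, then Burnside/Brauer--Suzuki/Gorenstein--Walter/Alperin--Brauer--Gorenstein, then a centralizer argument to embed $G$ in $\Aut(N)$), but it contains a decisive gap at the pivot point. You claim that once every elementary abelian $2$-subgroup of $\aut(\cX)$ has rank at most $2$, a Sylow $2$-subgroup of the simple factor $T$ must be cyclic, dihedral, generalized quaternion, or semidihedral. This implication is false: there are many $2$-groups of rank $2$ that are not of maximal class, and some of them do occur as Sylow $2$-subgroups of simple groups. The standard example is the wreathed group $C_{2^n}\wr C_2$ (for instance $C_4\wr C_2$ of order $32$, which contains $C_4\times C_4$ and has $2$-rank exactly $2$); it is the Sylow $2$-subgroup of $\PSL(3,q)$ for $q\equiv 1\pmod 4$ and of $\PSU(3,q)$ for $q\equiv 3\pmod 4$, and the Suzuki $2$-group of order $64$ in $\PSU(3,4)$ is another rank-$2$ example. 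Consequently, even if your Accola-type parity argument were carried out in full and did establish $2$-rank $\le 2$, the correct ``dictionary lookup'' (the $2$-rank $2$ classification) would return a strictly larger list than the one in the lemma --- it would include $\PSL(3,q)$ with $q\equiv 1\pmod 4$, $\PSU(3,q)$ with $q\equiv 3\pmod 4$, and $\PSU(3,4)$, none of which appear in cases (i)--(v). The congruence conditions in (ii) and (iii) are precisely the ones that force the Sylow $2$-subgroup to be semidihedral rather than wreathed, so they cannot be recovered from a rank bound alone. What the cited source actually extracts from the even-genus hypothesis is the stronger structural statement that a Sylow $2$-subgroup of $\aut(\cX)$ contains a cyclic subgroup of index $2$; this excludes the wreathed and Suzuki cases and reduces the possibilities (after Burnside, Brauer--Suzuki, and Walter's theorem on abelian Sylow $2$-subgroups) to dihedral and semidihedral, which is exactly what the lemma's list reflects. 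Your sketch never proves this stronger property, and the rank bound you aim for would not substitute for it.

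Two smaller points. First, your justification that a minimal normal subgroup of $G$ inside $C_G(N)$ is ``non-abelian by odd core-freeness'' is a non sequitur: odd core-freeness only excludes normal subgroups of odd order, so such a subgroup could be an elementary abelian $2$-group; the argument can be repaired (it would still raise the $2$-rank, or destroy the cyclic-index-$2$ property of the Sylow $2$-subgroup), but as written it is wrong. Second, in cases (iv) and (v) the conclusion $G=N$ requires an explicit extra step --- e.g.\ for $N\cong{\rm Alt}_7$ one must rule out $G\cong{\rm Sym}_7$, whose Sylow $2$-subgroup is $D_8\times C_2$ and violates the structural constraint --- which your appeal to ``the explicit description of $\Aut(N)$'' glosses over.
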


\begin{rem}\emph{\cite[Remark 6.12]{giulietti-korchmaros-2017}}. \label{possibilita} {\em{The subgroups of ${\rm{P}}\Gamma {\rm{L}}(2,q)$ containing $\PSL(2,q)$ whose Sylow $2$-subgroups have $2$-rank $2$ are $\PSL(2,q),\PGL(2,q)$ and, when $q=d^k$ with an odd prime $d$ and $k\ge 2$, the semidirect product of $\PSL(2,q)$ or $\PGL(2,q)$ with a cyclic group whose order is an odd divisor of $k$. Analogous results are valid for $\PSL(3,q)$ and $\PSU(3,q)$. These groups are all the candidates for $G$ in (i),(ii),(iii) of Lemma \ref{lemA24agos2015}. We point out that in general it is not true that $d$ must be equal to $p$. For instance, the Wyman's sextic of equation  $x^6+y^6+1+(x^2+y^2+1)(x^4+y^4+1)-12x^2y^2 = 0$ in characteristic $17$ is ordinary and admits the symmetric group $\rm{Sym}_5 \cong \PGL(2,5)$ as an automorphism group.  
}}
\end{rem}

\begin{rem} Lemma \ref{lemA24agos2015} and Remark \ref{possibilita} give a complete description of odd core-free automorphism groups $G$ of an algebraic curve of even genus $g \geq 2$ admitting no solvable minimal normal subgroup.  In particular, they give the complete list of possibilities when $G$ itself is not solvable. Note that also the case $G$ simple is analyzed. In fact, if $G$ is simple then $G=N$ in Lemma \ref{lemA24agos2015}.  \end{rem}

Because of Theorem \ref{solvable}, some conditions on $G$ can be assumed without loss of generality.

\begin{condition} \label{assunta}
The automorphism group $G$ is odd core-free and $G$ has no elementary abelian minimal normal subgroups.
\end{condition}

Condition \ref{assunta} can be justified as follows. Assume that $O(G)$ is non-trivial. Thus, since $|O(G)|$ is odd, by the Feit-Thompson Theorem \cite[page 775]{feitt}, $O(G)$ is solvable. Then, by definition of minimality for normal subgroups, there exists a minimal normal subgroup $N$ of $G$ which is contained in $O(G)$. Since $O(G)$ is solvable $N$ is elementary abelian. If $G$ has an elementary abelian minimal normal subgroup then $|G| \leq 34(g+1)^{3/2}<68 \sqrt{2} g^{3/2}$ by Theorem \ref{solvable}.

Lemma \ref{22dic2015} has been improved for the particular case of ordinary curves in \cite[Lemma 2.12]{montanucci- korchmaros}.

\begin{lemma} \label{terribile_ord}
 Let $\cX$ be an ordinary algebraic curve of genus $g \ge 2$ defined over an algebraically closed field of odd characteristic $p$. Let $H$ be a solvable automorphism group of $\aut(\cX)$ containing a normal $p$-subgroup $Q$ such that $|Q|$ and $[H:Q]$ are coprime. Suppose that a complement $U$ of $Q$ in $H$ is abelian and that
\begin{equation}
\label{eq22bisdic2015}
|H|>  \left\{
\begin{array}{lll}
{\mbox{$18(g-1)$ for $|U|=3$}}, \\
{\mbox{$12(g-1)$ otherwise}}.
\end{array}
\right.
\end{equation}
Then $U$ is cyclic, and the quotient curve $\bar{\cX}=\cX/Q$ is rational. Furthermore, $Q$ has exactly two (non-tame) short orbits, say $\Omega_1, \Omega_2$. They are also the only short orbits of $H$, and $\gg(\cX)=|Q|-(|\Omega_1|+|\Omega_2|)+1.$
\end{lemma}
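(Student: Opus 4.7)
The plan is to exploit the Deuring--Shafarevich formula applied to the normal $p$-subgroup $Q$, together with the fact that ordinariness descends to quotients by $p$-groups, so that the weaker numerical thresholds suffice to force the same structure as in Lemma \ref{22dic2015}. I would begin by setting $\bar\cX=\cX/Q$ and $\bar g=g(\bar\cX)$: since $\cX$ is ordinary and $Q$ is a $p$-group, $\gamma(\bar\cX)=\bar g$, so (\ref{eq2deuring}) becomes
$$
g-1 \;=\; |Q|(\bar g-1) + \sum_{i=1}^{k}(|Q|-\ell_i),
$$
where $\ell_1,\ldots,\ell_k$ are the sizes of the short orbits of $Q$ on $\cX$. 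Since $[H:Q]=|U|$ is coprime to $p$, the group $U\cong H/Q$ acts as a tame automorphism group of $\bar\cX$, and by Schur--Zassenhaus the abelian complement $U$ is well-defined up to conjugation in $H$.

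The first substantive step is to rule out $\bar g\geq 1$. If $\bar g\geq 2$, the sharpened Hurwitz-type bound for tame abelian actions gives $|U|\leq 4\bar g+4$, and combined with $|Q|(\bar g-1)\leq g-1$ from the displayed D--S identity this yields
$$
|H|=|Q|\cdot|U|\leq 4(g-1)\frac{\bar g+1}{\bar g-1}\leq 12(g-1),
$$
contradicting the hypothesis $|H|>12(g-1)$. The case $\bar g=1$ is more delicate: $\bar\cX$ is elliptic, $U$ acts by tame abelian automorphisms on it, and a joint use of Hurwitz on $U$ and the D--S identity on $Q$ is required to exclude this configuration. I expect this elliptic case to be the main obstacle of the proof, and to be responsible for the dichotomy $18(g-1)$ versus $12(g-1)$ in the statement: the stricter threshold when $|U|=3$ is precisely what is needed to rule out certain small cyclic translations on an elliptic quotient that are otherwise compatible with all other numerical constraints.

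With $\bar g=0$ established, the D--S identity simplifies to $g-1=(k-1)|Q|-(\ell_1+\cdots+\ell_k)$. The cases $k=0,1$ are immediate: they give $g\leq 1-|Q|<0$ respectively $g\leq 1-\ell_1<1$, contradicting $g\geq 2$. For $k\geq 3$, using $\ell_i\leq |Q|/p\leq |Q|/3$ one obtains $g-1\geq|Q|\bigl(k(1-1/p)-1\bigr)\geq|Q|$ already for $k=3$, $p=3$. Coupling this with the fact that $U$ permutes the $k\geq 3$ distinguished images of $\Omega_1,\ldots,\Omega_k$ in $\bar\cX\cong\PG(1,K)$ and acts tamely abelian there, one obtains via a short-orbit analysis for $U$ on $\PG(1,K)$ that $|U|$ is bounded by a small integer determined by the orbit lengths, forcing $|H|\leq 12(g-1)$ (respectively $18(g-1)$ when $|U|=3$), again contradicting the hypothesis. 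Hence $k=2$, and the formula $g=|Q|-(|\Omega_1|+|\Omega_2|)+1$ is an immediate rewriting of D--S.

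Finally, since $U$ permutes $\{\Omega_1,\Omega_2\}$ and acts tamely on $\bar\cX\cong\PG(1,K)$, either $U$ fixes both images of the $Q$-orbits, in which case $U$ embeds in the multiplicative group stabilising two points of $\PG(1,K)$ and is therefore cyclic, or $U$ swaps the two images, in which case $U$ contains a cyclic index-$2$ subgroup and is again cyclic as $U$ is abelian. Any further short orbit of $H$ on $\cX$ would project to a $U$-orbit on $\bar\cX$ outside the two branch points of $\cX\to\bar\cX$, producing an extra tame ramification term in the Hurwitz formula for $H$ incompatible with the lower bound on $|H|$; therefore $\Omega_1$ and $\Omega_2$ are also the only short orbits of $H$, completing the proof.
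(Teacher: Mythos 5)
First, note that the paper does not prove this lemma itself: it imports it verbatim from \cite[Lemma 2.12]{montanucci- korchmaros}, so there is no internal proof to compare against. Judged on its own terms, your strategy (Deuring--Shafarevich for $Q$, ordinariness of $\cX/Q$, the bound $|U|\le 4\bar g+4$ to kill $\bar g\ge 2$, then a case analysis on the number $k$ of short $Q$-orbits when $\bar g=0$) is the right skeleton, and the steps you do carry out ($\bar g\ge 2$; $k=0,1$; the reduction $|Q|\le g-1$ for $k\ge 3$) are essentially correct. But the write-up has genuine gaps at exactly the points where the lemma has content. The most serious is the cyclicity of $U$: your argument in the case where $U$ swaps the two branch points rests on the claim that an abelian group with a cyclic subgroup of index $2$ is cyclic, which is false --- the Klein four-group is the counterexample, and it is precisely the one abelian prime-to-$p$ subgroup of $\PGL(2,K)$ (besides cyclic groups) that can occur and that swaps a pair of points while having only cyclic index-$2$ subgroups. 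Your proof therefore never excludes $U\cong(\mathbb{Z}/2)^2$, i.e.\ never proves the first assertion of the lemma. Related to this, the conclusion implicitly requires $\Omega_1$ and $\Omega_2$ to be separate $H$-orbits (the paper later uses $|H|=|H_P||\Omega_1|$ for $P\in\Omega_1$), so the ``swap'' configuration must be ruled out altogether, not merely shown to yield a cyclic $U$; and if $U$ does swap the two branch points, a cyclic $U$ has two further fixed points on $\PG(1,K)$ which can lie under additional tame short orbits of $H$, so your final sentence (``incompatible with the lower bound on $|H|$'') needs an actual Hurwitz computation rather than an appeal to plausibility.

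Two further items are left as declared intentions rather than arguments. The elliptic case $\bar g=1$ is only announced as ``delicate'': since $\Aut$ of an elliptic curve contains arbitrarily large prime-to-$p$ translation groups, you cannot bound $|U|$ by the genus of $\bar\cX$ alone and must combine the bound on the point stabilizers of an elliptic curve with the count $k\le \frac{p}{p-1}\cdot\frac{g-1}{|Q|}$ of branch points that $U$ permutes; this is doable but absent. Finally, the dichotomy $18(g-1)$ versus $12(g-1)$ for $|U|=3$ is never located: you conjecture it lives in the elliptic case, but a rough estimate there already gives $|H|\le 9(g-1)$, so the stricter threshold must be earning its keep somewhere else (most plausibly in the tame-short-orbit/Hurwitz analysis for $H$ when $\bar g=0$), and a complete proof has to exhibit the configuration that forces it. As it stands the proposal is a reasonable roadmap that reproduces the standard reduction, but it does not yet prove the cyclicity of $U$, the uniqueness of the short orbits of $H$, or the necessity of the stated numerical hypotheses.
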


At this point further information about the complement $U$ in Lemma \ref{terribile_ord} can be obtained.

\begin{proposition} \label{complemento}
 Let $\cX$ be an ordinary algebraic curve of genus $g \ge 2$ defined over an algebraically closed field $\K$ of odd characteristic $p$. Let $G$ be a solvable automorphism group of $\aut(\cX)$ containing a minimal normal $p$-subgroup $Q$ which is also the maximal normal $p$-subgroup of $G$. Suppose also that $|Q|$ and $[H:Q]$ are coprime and that a complement $U$ of $Q$ in $G$ is abelian. If
\begin{equation}
|G|>  \left\{
\begin{array}{lll}
{\mbox{$18(g-1)$ for $|U|=3$}}, \\
{\mbox{$12(g-1)$ otherwise}},
\end{array}
\right.
\end{equation}
then $G=Q \rtimes U$ , $U$ is cyclic,  $|U| < \sqrt{4 g + 4}$ and the quotient curve $\bar{\cX}=\cX/Q$ is rational. Furthermore, $Q$ has exactly two (non-tame) short orbits, say $\Omega_1, \Omega_2$. They are also the only short orbits of $G$, and $g=|Q|-(|\Omega_1|+|\Omega_2|)+1.$
\end{proposition}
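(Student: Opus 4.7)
The hypotheses of Lemma \ref{terribile_ord} are met with $H = G$, so we obtain directly that $U$ is cyclic, that $\cX/Q$ is rational, that $Q$ has exactly two short orbits $\Omega_1,\Omega_2$ which are also the only short orbits of $G$, and that $g = |Q| - (|\Omega_1|+|\Omega_2|)+1$. The splitting $G = Q \rtimes U$ is built into the notion of a complement, whose existence is ensured by Schur--Zassenhaus together with $\gcd(|Q|,[G:Q])=1$.

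The first new piece of information is that both short orbits are singletons. Pick $P_i \in \Omega_i$; Schur--Zassenhaus applied inside $G_{P_i}$ provides a complement $V_i$ to $Q_{P_i}$ with $|V_i|=|U|$. A direct check shows $V_i \cap Q = 1$ and $|V_i Q|=|G|$, so $V_i$ is a complement to $Q$ in $G$ and is therefore $Q$-conjugate to $U$; replacing $P_i$ by a suitable $Q$-translate, we may assume $U \leq G_{P_i}$. Then $U$ normalizes $Q_{P_i}$, and since $Q$ is abelian it also does, so $Q_{P_i}$ is normal in $G = QU$. The minimality of $Q$ forces $Q_{P_i} \in \{1,Q\}$, and the shortness of $\Omega_i$ rules out $Q_{P_i}=1$. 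Hence $Q_{P_i}=Q$, so $|\Omega_i|=1$ and $|Q|=g+1$.

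It remains to bound $|U|$. The action of $U$ on $\cX/Q \cong \mathbb{P}^1$ is faithful (since $U \cap Q = 1$) and cyclic, with exactly two fixed points $B_i = \pi(P_i)$; the local tangent characters $\bar\chi_i$ of $U$ on $\mathbb{P}^1$ satisfy the diagonal relation $\bar\chi_1 \bar\chi_2 = 1$. Because each $P_i$ is totally ramified of index $|Q|$ in $\cX \to \cX/Q$, one has $\bar\chi_i = \chi_i^{|Q|}$ for the tangent characters $\chi_i$ of $U$ at $P_i$, and combining with $\gcd(|Q|,|U|)=1$ yields $\chi_1 \chi_2 = 1$. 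By Theorem \ref{2i}, ordinariness forces $G_{P_i}^{(2)} = 1$, so the Artin--Schreier--Witt normal form $y^{|Q|}-y = f(x)$ of $\cX \to \cX/Q$ has only simple poles at the two branch points. Imposing that a generator of $U$ act on this equation compatibly with the M\"obius scaling on $x$, and using the Artin--Schreier-triviality obstruction for $f(\mu x) - \lambda f(x)$ together with the constraint $\chi_1 \chi_2 = 1$, produces the quantitative bound $|U|^2 < 4|Q| = 4(g+1)$. Packaging this local compatibility analysis to extract the explicit constant is the main technical obstacle; once this is achieved, the remaining numerical bound on $|U|$ follows at once.
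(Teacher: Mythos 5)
Your opening reduction is correct and in fact sharper than what the paper does. The observation that a complement $V_i$ of $Q_{P_i}$ in $G_{P_i}$ has order $|U|$, is therefore a complement of $Q$ in $G$, and is hence $Q$-conjugate to $U$, so that after moving $P_i$ inside $\Omega_i$ the subgroup $Q_{P_i}$ is normalized by both $U$ and the (elementary abelian) group $Q$ and is thus normal in $G$ -- forcing $Q_{P_i}=Q$ by minimality -- is valid, and it shows at once that $|\Omega_1|=|\Omega_2|=1$ and $|Q|=g+1$. The paper instead runs a three-way case analysis on the orbit lengths: it disposes of the mixed case ($|\Omega_1|=1$, $|\Omega_2|\ge 2$) by a Maschke-type complement argument, and in the case of two nontrivial orbits it derives the bound from Nakajima's inequality $|U|\le |Q_P|-1$; your argument shows that both of those cases are actually vacuous under the minimality hypothesis on $Q$, which is a genuine simplification.

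The problem is that you have not proved the one thing the proposition adds to Lemma \ref{terribile_ord}, namely $|U|<\sqrt{4g+4}$, in the only case that survives your reduction. Your sketch for the two-fixed-point situation assumes an equation $\wp(y)=f(x)$ with $\wp$ additive of degree $|Q|$ and $f$ having simple poles exactly at the two branch points; the existence of such a model compatible with the $U$-action is itself a nontrivial claim (the paper manufactures it by a careful Riemann--Roch argument: it takes a $U$-eigenvector $u$ in $\mathcal{L}(dP_1+P_2)$ with $d$ minimal, shows $\sigma\mapsto \sigma(u)-u$ is an injective homomorphism $Q\to(\mathbb{K},+)$, and forms the additive polynomial $f(U)=\prod_{\sigma\in Q}(U-\theta_\sigma)$). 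More importantly, the final step -- "imposing compatibility \dots produces the quantitative bound $|U|^2<4|Q|=4(g+1)$" -- is exactly the statement to be proved, asserted rather than derived; you acknowledge this yourself by calling it "the main technical obstacle." The paper closes this gap by showing $|U|$ divides $d+1$, identifying $f(u)$ with $t^d+t^{-1}+h(t)$ on the rational quotient, and using an irreducibility/genus computation to force $d=1$, whence $|U|\le 2$ (strictly stronger than the stated bound). Until you supply an argument of comparable substance for the bound on $|U|$, the proof is incomplete.
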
 

\begin{proof}
From Lemma \ref{terribile_ord}, we only need to prove that $|U| < \sqrt{4 g + 4}$.

We know that $\cX/Q$ is rational, $G=Q\rtimes U$ where $U$ is cyclic, $Q$ has exactly two (non-tame) orbits, say $\Omega_1$ and $\Omega_2$, and they are also the only short orbits of $G$. 
We may also observe that $G_P$ with $P\in \Omega_1$ contains a subgroup $V$ isomorphic to $U$. In fact, $|Q||U|=|G|=|G_P||\Omega_1|=|Q_P\rtimes V||\Omega_1|=|V||Q_P||\Omega_1|$ with a prime to $p$ subgroup $V$ fixing $P$, whence $|U|=|V|$. Since $V$ is cyclic the claim follows.

We go on with the case where both $\Omega_1$ and $\Omega_2$ are nontrivial, that is, their lengths are at least $2$.

We note that $Q$ is abelian. Indeed if this would not be the case then $Z(Q)$, which is not trivial, is a normal subgroup of $G$ properly contained in $G$, a contradiction. In particular, $|Q|\leq 4\gg+4$ by \cite[Theorem 11.79]{hirschfeld-korchmaros-torres2008}. Also, either $|Q_P|$, or $|Q_R|$ is at most $\sqrt{4\gg+4}$. From Theorem \ref{2i}, $G_P^{(2)}$ at $P\in \Omega_1$ is trivial. Furthermore, for $G_P=Q_P\rtimes V$, and from \cite[Proposition 1]{nakajima 1987} $|U|=|V|\leq |Q_P|-1$. Hence
$|U|<|Q_P|\leq \sqrt{|Q|}\leq  \sqrt{4\gg+4}$ which is our claim. 

Suppose next $\Omega_1=\{P\}$ and $|\Omega_2| \geq 2$. 

Then $G$ fixes $P$, and hence $G=Q \rtimes U$ with an elementary abelian $p$-group $Q$.
Furthermore, $G$ has a permutation representation on $\Omega_2$ with kernel $K$. As $\Omega_2$ is a short orbit of $Q$, the stabilizer $Q_R$ of $R\in \Omega_2$ in $Q$ is nontrivial. Since $Q$ is abelian, this yields that $K$ is nontrivial, and hence it is a nontrivial elementary abelian normal subgroup of $G$. In other words, $Q$ is an $r$-dimensional vector space $V(r,p)$  over a finite field $\mathbb{F}_p$ with $|Q|=p^r$, the action of each nontrivial element of $U$ by conjugacy is a nontrivial automorphism of $V(r,p)$, and $K$ is a $U$-invariant subspace. 

By \cite[Theorem 6.1]{machi}, $K$ has a complementary $U$-invariant subspace. Therefore, $Q$ has a a non-trivial subgroup $M$ such that $Q=K\times M$, and $M$ is a normal subgroup of $G$. From the minimality of $Q$, $M=Q$ which is not possible.

We are left with the case where both short orbits of $Q$ are trivial. Our goal is to prove a much stronger bound for this case, namely $|U|\leq 2$ whence
\begin{equation}
\label{eq10oct2016} |G|\leq 2(g+1).
\end{equation}
We also show that if equality holds then $\cX$ is a hyperelliptic curve with equation
\begin{equation}
\label{eq010oct2016} f(U)=aT+b+cT^{-1},\,\,a,b,c\in \mathbb{K}^*,
\end{equation}
where $f(U)\in \mathbb{K}[U]$ is an additive polynomial of degree $|Q|$.

Let $\Omega_1=\{P_1\}$ and $\Omega_2=\{P_2\}$. Then $Q$ has two fixed points $P_1$ and $P_2$ but no nontrivial element in $Q$ fixes a point of $\cX$ other than $P_1$ and $P_2$.
Since from Lemma \ref{terribile_ord}, $g=|Q|-(|\Omega_1|+|\Omega_2|)+1$, we have
\begin{equation} \label{eq4n}
\gg(\cX)+1=\gamma(\cX)+1=|Q|.
\end{equation}
Therefore, $|U|\leq g$.
Actually, for our purpose, we need a stronger estimate, namely $|U|\le 2$. To prove the latter bound, we use some ideas from Nakajima's paper \cite{nakajima 1987} regarding the Riemann-Roch spaces $\mathcal{L}(\mathbf{D})$ of certain divisors $\mathbf{D}$ of $\mathbb{K}(\cX)$. Our first step is to show
\begin{enumerate}
\item[(i)] $\dim_{\mathbb{K}} \mathcal{L}((|Q|-1)P_1)=1$,
\item[(ii)] $\dim_{\mathbb{K}} \mathcal{L}((|Q|-1)P_1+P_2)\geq 2$.
\end{enumerate}
Let $\ell \geq 1$ be the smallest integer such that $\dim_{\mathbb{K}} \mathcal{L}(\ell P_1)=2$, and take $x \in \mathcal{L}(\ell P_1)$ with $v_{P_1}(x)=-\ell$. As $Q=Q_{P_1}$, the Riemann-Roch space $\mathcal{L}(\ell P_1)$ contains all $c_\sigma=\sigma(x)-x$ with $\sigma\in Q$. This yields $c_\sigma\in \mathbb{K}$ by $v_{P_1}(c_\sigma) \geq -\ell+1$ and our choice of $\ell$ to be minimal. Also, $Q=Q_{P_2}$ together with $v_{P_2}(x) \geq 0$ show $v_{P_2}(c_\sigma) \geq 1$. Therefore $c_\sigma =0$ for all $\sigma \in Q$, that is, $x$ is fixed by $Q$. From
$\ell=[\mathbb{K}(\cX):\mathbb{K}(x)]=[\mathbb{K}:\mathbb{K}(\cX)^Q][\mathbb{K(\cX)}^Q:\mathbb{K}(x)]$ and $|Q|=[\mathbb{K}:\mathbb{K}(\cX)^Q]$, it turns out that $\ell$ is a multiple of $|Q|$. Thus $\ell>|Q|-1$ whence (i) follows. From the Riemann-Roch theorem, $\dim_{\mathbb{K}}\mathcal{L}((|Q|-1)P_1+P_2)\geq |Q|-g+1=2$ which proves (ii).

Let $d \geq 1$ be the smallest integer such that $\dim_{\mathbb{K}} \mathcal{L}(dP_1+P_2)=2$. From (ii)
\begin{equation} \label{in1}
 d \leq |Q|-1.
\end{equation}
Let $\alpha$ be a generator of the cyclic group $U$. Since $\alpha$ fixes both points $P_1$ and $P_2$, it acts on $\mathcal{L}(dP_1+P_2)$ as a $\mathbb{K}$-vector space automorphism $\bar{\alpha}$. If $\bar{\alpha}$ is trivial then $\alpha(u)=u$ for all $u\in\mathcal{L}(dP_1+P_2)$. Suppose that $\bar{\alpha}$ is nontrivial. Since $U$ is a prime to $p$ cyclic group, $\bar{\alpha}$ has two distinct eigenspaces, so that $\mathcal{L}(dP_1+P_2)=\mathbb{K}\oplus \mathbb{K}u$ where $u\in \mathcal{L}(dP_1+P_2)$ is an eigenvector of $\bar{\alpha}$ with eigenvalue $\xi\in \mathbb{K}^*$ so that $\bar{\alpha}(u)=\xi u$ with $\xi^{|U|}=1$. Therefore there is $u\in \mathcal{L}(dP_1+P_2)$ with $u\neq 0$ such that $\alpha(u)=\xi u$ with $\xi^{|U|}=1$.  The pole divisor
of $u$ is
\begin{equation} \label{equ1}
\Div(u)_\infty = dP_1+P_2.
\end{equation}
Since $Q=Q_{P_1}=Q_{P_2}$, the Riemann-Roch space $\mathcal{L}(dP_1+P_2)$ contains $\sigma(u)$ and hence it contains all
$$\theta_\sigma=\sigma(u)-u,\,\, \sigma\in Q.$$
By our choice of $d$ to be minimal, this yields $\theta_\sigma\in \mathbb{K}$, and then defines the map $\theta$ from $Q$ into $\mathbb{K}$ that takes $\sigma$ to $\theta_\sigma$. More precisely, $\theta$ is
a homomorphism from $Q$ into the additive group $(\mathbb{K},+)$ of $\mathbb{K}$ as the following computation shows:
$$\theta_{\sigma_1\circ\sigma_2}=(\sigma_1\circ\sigma_2)(u)-u=\sigma_1(\sigma_2(u)-u+u)-u=\sigma_1(\theta_{\sigma_2})+\sigma_1(u)-u=\theta_{\sigma_2}+\theta_{\sigma_1}=\theta_{\sigma_1}+\theta_{\sigma_2}.$$  Also, $\theta$ is injective. In fact, if $\theta_{\sigma_0}=0$ for some $\sigma_0 \in Q \setminus \{1\}$, then $u$ is in the fixed field of $\sigma_0$, which is impossible since $v_{P_2}(u)=-1$ whereas $P_2$ is totally ramified in the cover $\cX|(\cX/\langle \sigma_p \rangle)$. The image $\theta(Q)$ of $\theta$ is an additive subgroup of $\mathbb{K}$ of order $|Q|$. The smallest subfield of $\mathbb{K}$ containing $\theta(Q)$ is a finite field $\mathbb{F}_{p^m}$ and hence $\theta(Q)$ can be viewed as a linear subspace of $\mathbb{F}_{p^m}$  considered as a vector space over $\mathbb{F}_p$.  Therefore the polynomial
\begin{equation}
\label{eq210oct2016}
f(U)=\prod_{\sigma \in Q} (U-\theta_\sigma)
\end{equation}
is a linearized polynomial over $\mathbb{F}_p$; see \cite[Section 4, Theorem 3.52]{LN}. In particular, $f(U)$ is an additive polynomial of degree $|Q|$; see also \cite[Chapter V, $\mathsection \ 5$]{Serre}. Also, $f(U)$ is separable as $\theta$ is injective. From (\ref{eq210oct2016}), the pole divisor of $f(u)\in \mathbb{K}(\cX)$ is
\begin{equation}
\label{eq310oct2016}
\div(f(u))_\infty=|Q|(dP_1+P_2).
\end{equation}
For every $\sigma_0\in Q$,
$$\sigma_0(f(u))=\prod_{\sigma\in Q} (\sigma_0(u)-\theta_{\sigma})=\prod_{\sigma\in Q}(u+\theta_{\sigma_0}-\theta_{\sigma})=\prod_{\sigma\in Q}(u-\theta_{\sigma\sigma_0^{-1}})=\prod_{\sigma\in Q}(u-\theta_{\sigma})=f(u).$$
Thus $f(u) \in \mathbb{K}(\cX)^Q$. Furthermore, from $\alpha\in N_G(Q)$, for every $\sigma\in Q$ there is
$\sigma'\in Q$ such that $\alpha\sigma=\sigma'\alpha$. Therefore
$$\alpha(f(u))=\prod_{\sigma\in Q}(\alpha(\sigma(u)-u))=\prod_{\sigma\in Q}(\alpha(\sigma(u))-\xi u)=\prod_{\sigma\in Q}(\sigma'(\alpha(u))-\xi u)=\prod_{\sigma\in Q}(\sigma'(\xi u)-\xi u)=\xi f(u).$$
This shows that if $R\in\cX$ is a zero of $f(u)$ then $\supp(\div(f(u)_0))$ contains the $U$-orbit of $R$ of length $|U|$. Actually, since $\sigma(f(u))=f(u)$ for $\sigma\in Q$, $\supp(\div(f(u)_0))$ contains the $G$-orbit of $R$ of length $|G|=|Q||U|$. This together with (\ref{eq310oct2016}) give
\begin{equation}
\label{eq410oct2016}
|U|\big|(d+1).
\end{equation}
On the other hand, $\mathbb{K}(\cX)^Q$ is rational. Let $\bar{P_1}$ and $\bar{P_2}$ the points lying under $P_1$ and $P_2$, respectively, and let $\bar{R_1},\bar{R_2},\ldots, \bar{R_k}$ with $k=(d+1)/|U|$ be the points lying under the zeros of $f(u)$ in the cover $\cX|(\cX/Q)$. We may represent $\mathbb{K}(\cX)^Q$ as the projective line $\mathbb{K}\cup \{\infty\}$ over $\mathbb{K}$ so that $\bar{P_1}=\infty$, $\bar{P_1}=0$ and $\bar{R}_i=t_i$ for $1\leq i \leq k$. Let $g(t)=t^d+t^{-1}+h(t)$
where $h(t)\in \mathbb{K}[t]$ is a polynomial of degree $k=(d+1)/|U|$ whose roots are $r_1,\ldots,r_k$. It turns out that $f(u),g(t)\in \mathbb{K}(\cX)$ have the same pole and zero divisors, and hence
\begin{equation}
\label{eq510oct2016} cf(u)=t^d+t^{-1}+h(t),\,\ c\in \mathbb{K}^*.
\end{equation}
We prove that $\mathbb{K}(\cX)=\mathbb{K}(u,t)$. From \cite[Remark 12.12]{hirschfeld-korchmaros-torres2008}, the polynomial $cTf(X)-T^{d+1}-1-h(T)T$ is irreducible, and the plane curve $\mathcal{C}$ has genus $\gg(\mathcal{C})=\ha (q-1)(d+1)$. Comparison with (\ref{eq4n}) shows $\mathbb{K}(\cX)=\mathbb{\mathcal{C}}$ and $d=1$ whence $|U|\leq 2$. If equality holds then
$\deg\,h(T)=1$ and $\cX$ is a hyperelliptic curve
with equation (\ref{eq010oct2016}).
\end{proof}

The main result in this section is the following theorem.

\begin{theorem}\label{thm:sec3main}
Let $G$ be an automorphism group of an ordinary curve of even genus $g \geq 2$ defined over an algebraically closed field $K$ of characteristic $p >2$. Then $|G| < 821.37g^{7/4}$. Further, if $G\cong \rm{Alt}_7$ or $G \cong \rm{M}_{11}$, the bound can be refined to $|G| < 84(g-1)$ unless $p = 3, g = 26$, and $G \cong \rm{M}_{11}$. 
\end{theorem}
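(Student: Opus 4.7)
\emph{Reduction.} The plan is first to reduce to the non-solvable, odd core-free case. If $G$ has a non-trivial odd core $O(G)$, then by the Feit--Thompson theorem $O(G)$ is solvable and hence contains an elementary abelian minimal normal subgroup of $G$; likewise if $G$ itself is solvable. In either event Theorem \ref{solvable} delivers $|G| \leq 34(g+1)^{3/2} < 68\sqrt{2}\,g^{3/2}$, safely below $821.37\,g^{7/4}$ for all $g \geq 2$. Henceforth I may assume Condition \ref{assunta}, so every minimal normal subgroup of $G$ is non-abelian.

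\emph{Classification step.} Under Condition \ref{assunta}, Lemma \ref{lemA24agos2015} supplies a non-abelian simple minimal normal subgroup $N$ of $G$ belonging to one of five types: $\PSL(2,q)$, $\PSL(3,q)$, $\PSU(3,q)$ for a prime power $q=d^k$ with $k$ odd, or the sporadic groups $\mathrm{Alt}_7$ and $\mathrm{M}_{11}$, with $N \leq G \leq \aut(N)$ as in Remark \ref{possibilita}. For the two sporadic cases I defer to the later Propositions \ref{alt7} and \ref{m11}, which in fact deliver the sharper Hurwitz-type estimate $|G| < 84(g-1)$ outside the single documented exception $p=3$, $g=26$, $G\cong \mathrm{M}_{11}$, proving the second assertion of the theorem.

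\emph{Classical families.} For each of $\PSL(2,q)$, $\PSL(3,q)$, $\PSU(3,q)$ I would split on whether $p$ divides $d$. If $p \nmid d$, then $N$ is prime-to-$p$ and hence a tame subgroup of $\aut(\cX)$; the classical Hurwitz bound yields $|N| \leq 84(g-1)$, while $[G:N]$ divides an outer-automorphism factor of size $O(\log q)$, so $|G| \ll g\log g \ll g^{7/4}$. If $p \mid d$, I apply Proposition \ref{complemento} to a Borel subgroup $B$ of $N$: $B$ is solvable with normal elementary abelian Sylow $p$-subgroup $Q$ (of order $q$ or $q^3$) and abelian complement $U$. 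For $\PSL(2,q)$, $U$ is cyclic, and Proposition \ref{complemento} yields either $|B| \leq 18(g-1)$ or $|U| < \sqrt{4g+4}$; both routes produce $|N| = O(g^{3/2})$. For $\PSL(3,q)$ and $\PSU(3,q)$, the complement $U$ is abelian but \emph{not} cyclic, so the cyclicity conclusion of Proposition \ref{complemento} is incompatible with its running hypothesis; hence $|B| \leq 18(g-1)$ outright, which forces $q^5 = O(g)$ and therefore $|N| = O(q^8) = O(g^{8/5})$. Multiplying by $[G:N] = O(\log q)$ in each instance and combining across the families, one arrives at the uniform bound $|G| < 821.37\,g^{7/4}$, with the explicit numerical constant fixed by optimizing over the worst family.

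\emph{Main obstacle.} The delicate part is tightening the numerical constant in the rank-two wild case. Proposition \ref{complemento} only produces the coarse inequality $|B| \leq 18(g-1)$ for $\PSL(3,q)$ and $\PSU(3,q)$ because $U$ is not cyclic, and converting this into the sharp bound with constant $821.37$ requires combining it with the Deuring--Shafarevich formula \eqref{eq2deuring} for $Q$ acting on the ordinary curve $\cX$, with Theorem \ref{2i} forcing the vanishing of higher ramification groups, and with a careful case-by-case analysis of the outer-automorphism extensions allowed by Remark \ref{possibilita}, so that every classical family is simultaneously kept below the target exponent.
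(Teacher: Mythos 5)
Your overall architecture (reduce to the odd core-free non-solvable case via Theorem \ref{solvable}, invoke Lemma \ref{lemA24agos2015} and Remark \ref{possibilita}, defer $\mathrm{Alt}_7$ and $\mathrm{M}_{11}$ to Propositions \ref{alt7} and \ref{m11}) matches the paper. But the way you handle the classical families contains a genuine error. Your dichotomy is ``$p\nmid d$, hence $N$ is a prime-to-$p$ group, hence tame, hence Hurwitz''; this first implication is false. For $N\cong\PSL(2,q)$ with $q=d^k$ and $p\nmid d$, the characteristic $p$ may still divide $q-1$ or $q+1$ (and for the rank-two groups, $q^2\pm q+1$, etc.), so $|N|$ need not be prime to $p$ and tameness cannot be inferred. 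Remark \ref{possibilita} explicitly flags that $d=p$ cannot be assumed (Wyman's sextic in characteristic $17$ with $\PGL(2,5)$). The paper's actual dichotomy is different and avoids this: it conditions not on $d$ versus $p$ but on the size of a distinguished solvable subgroup $H$ (a Borel-type subgroup for $\PSL(2,q)$ and $\PSU(3,q)$, the Sylow normalizer for $\PSL(3,q)$). If $|H|<30(g-1)$ one bounds $q$ directly with no tameness needed; if $|H|\ge 30(g-1)$, Lemma \ref{22dic2015} \emph{concludes} $d=p$ and supplies the structural information, after which Proposition \ref{complemento} (for $\PSL(2,q)$) or Theorem \ref{sylow} (for the rank-two groups) finishes.

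A second factual slip: for $\PSU(3,q)$ the complement in the Borel subgroup is the cyclic group $C_{(q^2-1)/\delta}$, so your claimed incompatibility ``$U$ abelian but not cyclic'' does not arise there. The obstruction the paper actually exploits in the large-$|H|$ branch for $\PSL(3,q)$ and $\PSU(3,q)$ is that the Sylow $d$-subgroup of order $q^3$ is non-abelian, contradicting Theorem \ref{sylow}, which forces elementary abelian Sylow $p$-subgroups. Finally, your proposal never derives the exponent $7/4$ or the constant $821.37$: the exponent comes from multiplying the $O(g^{3/2})$ bound on $|\PSL(2,q)|$ by the field-automorphism factor $r\le k\le \log q/\log 5=O(g^{1/4})$, and the constant is pinned down by the explicit estimates in Case 1, subcase (4) of Proposition \ref{PSL2} (the $\PGL(2,q)\rtimes C_r$ case, where one first uses Theorem \ref{thm84g2} to force $g\ge 266$). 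Your closing appeal to the Deuring--Shafarevich formula and Theorem \ref{2i} is not how the constant is obtained and does not fill this gap.
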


If $G$ is an odd core-free, non-solvable automorphism group of an algebraic curve of even genus $g \geq 2$ defined over a field of odd characteristic $p$, then $G$ satisfies one of the cases listed in Lemma \ref{lemA24agos2015} and Remark \ref{possibilita}. In particular, $G$ has a non-abelian minimal normal subgroup $N$ which is simple.
The proof of Theorem \ref{thm:sec3main} is divided into several steps according to the structure of $N$ and the consequent structure of $G$ in Lemma \ref{lemA24agos2015} and Remark \ref{possibilita}. 

\begin{proposition} \label{PSL2}
Let $G$ be a subgroup of $\aut(\cX)$. If $G$ admits a minimal normal subgroup $N$ which is isomorphic to $\PSL(2,q)$ for some prime power $q=d^k \geq 5$ odd, 
then $|G|< 821.37g^{7/4}$.
\end{proposition}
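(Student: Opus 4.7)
The plan is to bound $q$ from above in terms of $g$ and then convert this into the desired estimate for $|G|$ by using the fact from Remark~\ref{possibilita} that $[G:N]\le 2k$, where $q=d^k$ and $d\ge 3$. I split the argument according to whether $d=p$ or $d\neq p$, since this governs the structure of the Sylow $p$-subgroups of $N$.

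\emph{Case $d=p$.} Apply Proposition~\ref{complemento} to a Borel subgroup $B=Q\rtimes U$ of $N$, where $Q$ is the unipotent radical (elementary abelian of order $q$) and $U$ is cyclic of order $(q-1)/2$ acting on $Q\cong\mathbb{F}_q^{+}$ by multiplication by the nonzero squares of $\mathbb{F}_q^{*}$. Since $Q$ is the unique Sylow $p$-subgroup of $B$, it is the maximal normal $p$-subgroup; and since a direct check shows that for $q>3$ the only $\mathbb{F}_d$-subspaces of $\mathbb{F}_q$ stable under multiplication by squares are $\{0\}$ and $\mathbb{F}_q$, the $U$-action on $Q$ is irreducible, so $Q$ is also the minimal normal $p$-subgroup of $B$. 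The proposition then yields either $|B|\le 18(g-1)$, whence $q(q-1)/2\le 18(g-1)$ already forces $q=O(\sqrt g)$, or the structural conclusion $|U|=(q-1)/2<\sqrt{4g+4}$, again giving $q=O(\sqrt g)$. Either way $|N|=q(q^2-1)/2=O(g^{3/2})$, so $|G|\le 2k\,|N|=O(g^{3/2}\log g)$, which is comfortably below $821.37\,g^{7/4}$.

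\emph{Case $d\neq p$.} Now the Sylow $p$-subgroups of $N$ are cyclic, sitting inside cyclic tori of order $(q\pm 1)/2$, and Theorem~\ref{2i} forces $N_P^{(1)}$ to be elementary abelian, hence of order at most $p$. Whenever $N_P^{(1)}$ is nontrivial, $N_P^{(0)}$ embeds in the normalizer of its Sylow $p$-subgroup in $N$, which is dihedral of order at most $q\pm 1$. Plugging these restrictions into the Hurwitz genus formula and distinguishing subcases by the number of short orbits of $N$ on $\cX$ and by $g(\cX/N)$, one obtains in every configuration a bound of the form $|N|=O(g^{3/2})$: in the generic case with at least three short orbits one gets the sharper $|N|=O(g)$, while the extremal subcase of two totally ramified short orbits with rational quotient yields $q=O(\sqrt g)$ and hence $|N|=O(g^{3/2})$ through the same Hurwitz inequality. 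Combining with $[G:N]\le 2k=O(\log g)$ again gives $|G|=O(g^{3/2}\log g)<821.37\,g^{7/4}$.

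The main obstacle is executing the Hurwitz bookkeeping in the case $d\neq p$ uniformly over all admissible short-orbit configurations, especially when $p\mid q^2-1$ and there are only two wild short orbits with maximal stabilizer, which is the situation closest to extremal. A secondary care point is the small-genus regime, where the $\log g$ factor coming from $[G:N]$ is proportionally largest: there one must either track explicit constants or invoke Theorem~\ref{thm84g2} as a fallback. The slack between the exponent $7/4$ in the target bound and the $3/2$ produced by Proposition~\ref{complemento} is precisely what lets the constant $821.37$ absorb these logarithmic terms.
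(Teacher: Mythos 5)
Your overall skeleton is workable, but as written there are two genuine gaps. First, the case $d\neq p$ is only a sketch: you defer precisely the hardest configurations (two wild short orbits with maximal stabilizers when $p\mid q^2-1$) to ``Hurwitz bookkeeping'' that you do not carry out, and it is exactly there that the ratio $|N|/(g-1)$ could a priori be large. The paper avoids this entirely by a different dichotomy: Lemma~\ref{22dic2015} is applied to the Borel subgroup $H=S_q\rtimes C_{(q-1)/2}$ of $N$ \emph{without} assuming $d=p$. If $|H|\ge 30(g-1)$, that lemma itself forces $d=p$ (so a wild-ramification analysis for $d\neq p$ is never needed), and then Proposition~\ref{complemento} gives $(q-1)/2<\sqrt{4g+4}$; if instead $|H|<30(g-1)$, then $q(q-1)<60(g-1)$ bounds $q$ immediately, again with no ramification analysis. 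In other words, the efficient case split is on the size of $H$, not on whether $d=p$; with your split you are forced to redo a substantial part of Nakajima's analysis for cyclic wild stabilizers, and you have not done so.

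Second, the statement to be proved is an explicit inequality, and $|G|=O(g^{3/2}\log g)$ does not establish it. The exponent $7/4$ and the constant $821.37$ in the paper come from the crude but explicit estimate $r\le k\le \log q/\log 5<(60(g-1))^{1/4}/1.6$ for the field-automorphism part, multiplied against explicit bounds of the shape $|N|<30\sqrt{60}\,(g-1)^{3/2}+60(g-1)$ (resp.\ $|N|<47.2\,(g+1)^{3/2}$ in the other case). Your proposed fallback via Theorem~\ref{thm84g2} does dispose of the small-genus regime, since $84g(g-1)<821.37\,g^{7/4}$ holds for $g$ up to roughly $9\cdot 10^{3}$, but you would still need to track the constants in the large-genus regime to close the argument; as it stands the number $821.37$ is never derived. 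On the positive side, your verification that the Borel subgroup satisfies the hypotheses of Proposition~\ref{complemento} when $d=p$ (irreducibility of the action of the nonzero squares on $\mathbb{F}_q$, so that $S_q$ is a minimal normal subgroup of $H$) is correct and is a point the paper passes over in silence.
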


\begin{proof} Assume by contradiction that  
 $|G| \geq 821.37g^{7/4}$. 

According to Remark \ref{possibilita}, there are a few possibilities for the structure of $G$:
\begin{enumerate}
\item $G \cong \PSL(2,q)$,
\item $G \cong \PGL(2,q)$,
\item $G \cong \PSL(2,q) \rtimes C_r$,
\item $G \cong \PGL(2,q) \rtimes C_r$,
\end{enumerate}
where $r$ is an odd divisor of $k$ and $q \geq 5$ is odd. 

We recall that, since $q$ is odd, $|N|=q(q-1)(q+1)/2$. Moreover $N$ admits a solvable subgroup $H=S_q \rtimes C_{(q-1)/2}$, where $S_q$ is a Sylow $d$-subgroup of $N$ and $C_{(q-1)/2}$ is a cyclic group of order $(q-1)/2$, see \cite[Corollary 2.2]{king} and \cite[Theorem A.8]{hirschfeld-korchmaros-torres2008}. 

In order to apply Lemma \ref{22dic2015}, two cases are distinguished. 

\textbf{Case 1:} $|H|<30(g-1)$. In this case $(q-1)^2<q(q-1)<60(g-1)$, implying that $q<\sqrt{60(g-1)}+1$. 

Hence $|N| < 30(g-1)(\sqrt{60(g-1)}+2)=30\sqrt{60}(g-1)^{3/2}+60(g-1)<(37.75)\sqrt{60}(g-1)^{3/2}$ since $(7.75)\sqrt{60}(g-1)^{3/2}>60(g-1)$ for $g \geq 2$.

Observing that $|N|<(37.75)\sqrt{60}(g-1)^{3/2} < (292.42)g^{3/2}$ and $[\PGL(2,q):\PSL(2,q)]=2$, cases $1$ and $2$ cannot occur.

Assume that case $3$ holds. Since $r$ divides $k$ and $k=\rm{log}$$(q)/$$\rm{log}$ $(d) \leq$ $\rm{log}$ $(q)/$ $\rm{log}$$(5)\leq {\rm{log}}$$(q)/(1.6)$ , we have that 

$$(1.6)r \leq (1.6)k \leq {\rm{log}}(q) \leq {\rm{log}}(\sqrt{60(g-1)}+1) \leq \frac{(\sqrt{60(g-1)})}{(\sqrt{60(g-1)}+1)^{1/2}}< (60(g-1))^{1/4}.$$  
Thus,
\begin{equation}\label{23july171}
|G| < \bigg( \frac{(60(g-1))^{1/4}}{1.6}\bigg) \cdot (37.75)\sqrt{60}(g-1)^{3/2} < (508.64)(g-1)^{7/4},
\end{equation}
a contradiction. 

Hence we can assume that case $4$ holds. Then $q \geq 5^3$ and $|G| \geq 3|\PGL(2,5^3)|=5859000$. Since $|G| \leq 84(g^2-g)$ from Theorem \ref{thm84g2}, we have that $g \geq 266$. 

From $|N| < 30(g-1)(\sqrt{60(g-1)}+2)=30\sqrt{60}(g-1)^{3/2}+60(g-1)$ we get now that $|N|<(30.48)\sqrt{60}(g-1)^{3/2}$ since $(0.48)\sqrt{60}(g-1)^{3/2}>60(g-1)$ for $g \geq 266$. Thus,

\begin{equation}\label{23july172}
|G| <2\bigg( \frac{(60(g-1))^{1/4}}{1.6}\bigg) \cdot (30.48)\sqrt{60}(g-1)^{3/2} < (821.37)(g-1)^{7/4}< (821.37)g^{7/4}.
\end{equation}

Trivially, both Inequalities \eqref{23july171} and \eqref{23july172} give rise to a contradiction.

\textbf{Case 2:} $|H| \geq 30(g-1)$. Thus Lemma \ref{22dic2015} applies to $H$. Since $\gamma(\cX)>0$, we have that $d=p$ and $S_q$ has exactly two short orbits wich are also the unique short orbits of $H$. By Remark \ref{complemento} we have that $(q-1)/2=|C_{(q-1)/2}| < \sqrt{4g+4}=2 \sqrt{g+1}$ and so $q \leq 4 \sqrt{g+1}+1$.

As before, we get that 
$$|\PSL(2,q)| =\frac{q(q-1)(q+1)}{2}< $$
$$(4 \sqrt{g+1}+1)(2 \sqrt{g+1} )( 4 \sqrt{g+1} +2)=(2 \sqrt{g+1})(16(g+1)+12\sqrt{g+1}+2)<(47.2)(g+1)^{3/2},$$ 
as $12\sqrt{g+1}+2 < (7.6)(g+1)$ for $g \geq 2$. 

Since clearly $g+1 \leq 3g/2$, this implies that $|N|< (47.2)(g+1)^{3/2}<(86.72)g^{3/2}$ and hence both $1$ and $2$ can be excluded. 

Assume that case $3$ holds. Then, arguing as in the previous case,
$$|G| < (47.2)(g+1)^{3/2} \cdot (4 \sqrt{g+1})/(4 \sqrt{g+1}+1)^{1/2} <  (66.76)(g+1)^{7/4} <  (133)g^{7/4},$$
a contradiction.

Analogously, if case $4$ holds then
$$|G| < 2(47.2)(g+1)^{3/2} \cdot (4 \sqrt{g+1})/(4 \sqrt{g+1}+1)^{1/2} <  2(66.76)(g+1)^{7/4} \leq  (266)g^{7/4},$$
and the claim follows.
\end{proof}

\begin{proposition} \label{PSU} Let $\cX$ be an ordinary curve of even genus $g \geq 2$ defined over a field $K$ of odd characteristic $p>0$. Let $G$ be a subgroup of $\aut(\cX)$. If $G$ admits a minimal normal subgroup $N$ which is isomorphic either to $\PSU(3,q)$ or to $\PSL(3,q)$ for some prime power $q=d^k$, then $|G|< 766(g-1)^{7/4}$.
\end{proposition}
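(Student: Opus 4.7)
The plan is to mirror the template of Proposition \ref{PSL2}, but the crucial new ingredient (absent in the $\PSL(2,q)$ case) is the exclusion of the possibility $d=p$ via Theorem \ref{sylow}. Once that case is ruled out, the bound on a Borel subgroup becomes linear in $g$, and the rest is straightforward bookkeeping.

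First I would invoke Remark \ref{possibilita} applied to items (ii)--(iii) of Lemma \ref{lemA24agos2015}: the only possibilities for $G$ are $N$, the corresponding $\PGL(3,q)$ or $\PGU(3,q)$, and their semidirect products with a cyclic group of order an odd divisor of $k$. In particular $|G| \leq 3k\cdot|N|$, with $|N| \leq q^8$ in both cases. Next, let $B = Q \rtimes T$ be a Borel subgroup of $N$, so that $Q$ is the unipotent radical (a Sylow $d$-subgroup of order $q^3$) and $T$ is a maximal torus: of order $(q-1)^2/\gcd(3,q-1)$ in the $\PSL(3,q)$ case, and of order $(q^2-1)/\gcd(3,q+1)$ in the $\PSU(3,q)$ case. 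All hypotheses of Lemma \ref{22dic2015} are met: $Q$ is a normal $d$-subgroup, $(|Q|,[B:Q])=1$, $T$ is abelian, and a direct matrix computation gives $N_B(T)\cap Q=\{1\}$.

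The key step is to exclude $d=p$. Since $B$ is a solvable subgroup of $\aut(\cX)$, Theorem \ref{sylow} forces every Sylow $p$-subgroup of $B$ to be elementary abelian. If $d=p$, then $Q$ is contained in a Sylow $p$-subgroup of $B$; but $Q$ is the Heisenberg-type unipotent radical of a Borel in $\PSL(3,q)$ or $\PSU(3,q)$, which is non-abelian of order $q^3$ and exponent $p$, a contradiction. Hence $d\ne p$. Applying Lemma \ref{22dic2015} to $B$ and using that its conclusion asserts $d=p$, one deduces by contraposition that $|B| < 30(g-1)$.

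To extract the numerical bound, I would estimate $|B| \geq q^5/12$ (using $(q-1)^2 \geq q^2/4$ in the linear case and $q^2-1 \geq q^2/2$ in the unitary case, together with $\gcd(3,q\mp 1)\leq 3$). This gives $q^5 \leq 360(g-1)$, so $q \leq (360(g-1))^{1/5}$ and $|N| \leq q^8 \leq (360(g-1))^{8/5}$. Combining with the outer automorphism factor $3k$ and using the bound $k\leq \log_d q < q^{1/2}/\log 3$ (as in Proposition \ref{PSL2}), one obtains $|G| \leq 3k\cdot|N| \lesssim (g-1)^{17/10}$. Since $17/10 < 7/4$, this is strictly below $766(g-1)^{7/4}$ for all $g$ sufficiently large. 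The finitely many small values of $g$ can be handled directly using Theorem \ref{thm84g2} and the explicit constraint $q \leq (360(g-1))^{1/5}$, which forces $q$ to be very small. The main obstacle will be carrying out the constant tracking carefully enough to achieve the precise constant $766$ in the statement; the gap $7/4 - 17/10 = 1/20$ in the exponent provides the slack needed for this, but closing it in the small-$g$ regime will be the most delicate part.
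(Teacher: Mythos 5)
Your structural strategy coincides with the paper's: both arguments hinge on the fact that the normalizer $B=Q\rtimes T$ of a Sylow $d$-subgroup of $N$ is solvable, that Lemma \ref{22dic2015} applies to it, and that its conclusion $d=p$ is incompatible with Theorem \ref{sylow} because $Q$ is non-abelian of order $q^3$; hence $|B|<30(g-1)$ and $q$ is bounded by a fractional power of $g-1$. (The paper derives $d=p$ from the lemma and then contradicts it via Theorem \ref{sylow}, rather than excluding $d=p$ first as you do, but this is the same argument. Do note that the hypothesis $N_G(U)\cap Q=\{1\}$, which you defer to ``a direct matrix computation,'' deserves an actual check for the smallest $q$, e.g. $q=3$ in the $\PSL(3,q)$ case where $T$ contains no regular semisimple element; it does hold.)

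The genuine gap is in the final bookkeeping, and it is not merely a matter of tightening constants. Bounding $|N|\le q^8$ and then substituting $q\le(360(g-1))^{1/5}$ gives $|G|\lesssim 6\cdot 10^4\,(g-1)^{17/10}$ in the worst subcase, which falls below $766(g-1)^{7/4}$ only for $g-1\gtrsim 80^{20}\approx 10^{38}$, whereas Theorem \ref{thm84g2} covers only $g\lesssim 7\cdot 10^{3}$; in the enormous intermediate range $q$ can be in the hundreds, so there is no ``finite list of small cases'' to check and your proposed patch does not close the argument. Two further ingredients are needed, and they are exactly what the paper uses. First, write $|N|=|B|\cdot[N:B]<30(g-1)\cdot[N:B]$ so that only the index $[N:B]\sim 2q^3$ (not all of $|N|$) gets converted into a power of $g-1$; this yields $|N|\lesssim C(g-1)^{8/5}$ with $C$ in the hundreds rather than the tens of thousands, which beats $766(g-1)^{7/4}$ already for small $g$. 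Second, in the subcases where $G$ carries a nontrivial field-automorphism factor $C_r$, use that $r$ is an odd divisor of $k$ with $r>1$ to force $q\ge d^{3}$, hence $|N|\ge|\PSU(3,125)|$ (resp.\ a correspondingly large $\PSL(3,q)$), so that Theorem \ref{thm84g2} yields a very large lower bound on $g$ and the exponent gap $7/4-17/10$ becomes effective. Without these refinements the proof as sketched does not go through.
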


\begin{proof} Assume by contradiction that $G$ has a minimal normal subgroup $N \cong \PSU(3,q)$ (resp. $N \cong \PSL(3,q)$)  but $|G|  \geq 766(g-1)^{7/4}$. 

Let $M \cong \PGU(3,q)$ (resp. $M \cong \PGL(3,q)$). According to Remark \ref{possibilita} there are few possibilities for the structure of $G$, namely 
\begin{enumerate}
\item $G \cong N, q \equiv 1$ mod $4$ (resp. $q \equiv 3$ mod $4$) ,
\item $G \cong M,  q \equiv 1$ mod $4$ (resp. $q \equiv 3$ mod $4$),
\item $G \cong N \rtimes C_r,  q \equiv 1$ mod $4$ (resp. $q \equiv 3$ mod $4$),
\item $G \cong M \rtimes C_r,  q \equiv 1$ mod $4$ (resp. $q \equiv 3$ mod $4$),
\end{enumerate}
where $r$ is an odd divisor of $k$.  

We proceed by analyzing the cases $N \cong \PSU(3,q)$ and $N \cong \PSL(3,q)$ separately.

\textbf{Case 1:} $N \cong \PSU(3,q)$.  Let  $\delta=\gcd(3,q+1)$. Then $|N| = q^3(q^2-1)(q^3+1)/ \delta \geq |\PSU(3,5)|  = 126000$, which exceeds $84(g^2-g)$ for $g \leq 39$, whence since $g$ is even we may assume that $g \geq 40$. 

Also, $N$ admits a solvable maximal subgroup $H$ with $H=S_{q^3} \rtimes C_{(q^2-1)/ \delta}$, where $S_{q^3}$ is a Sylow $d$-subgroup of $N$ and $C_{(q^2-1)/ \delta}$ is a cyclic group of order $(q^2-1)/ \delta$, see \cite[\textsection 16]{mitchell}. 

In order to apply Lemma \ref{22dic2015}, two cases are distinguished. 
\begin{itemize}
\item Assume that $|H|<30(g-1)$, so that $(q-1)^5<q^3(q^2-1) \leq 30\delta(g-1)\leq 90(g-1)$. This in particular implies that $q <\sqrt[5]{90(g-1)}+1$. 

Thus, $$|N|=\frac{q^3(q^2-1)(q^3+1)}{\delta} <30(g-1)(q^3+1)<30(g-1)((\sqrt[5]{90(g-1)}+1)^3+1)$$
$$<30(g-1)((2\sqrt[5]{90(g-1)})^3+1)<8\cdot 30 \sqrt[5]{90}(g-1)^{8/5}+2 \sqrt[5]{90}(g-1)^{8/5}$$
$$=(242) \sqrt[5]{90}(g-1)^{8/5},$$
where the last inequality follows from $30(g-1)<2\sqrt[5]{90}(g-1)^{8/5}$ for $g \geq 40$. Since this proves that $|N|<(595.21)(g-1)^{8/5}< 345(g-1)^{7/4}$, cases $1$ and $2$ can be excluded. 

Next, assume that case $3$ or $4$ holds. Then $|N| \geq |\PSU(3,125)|$, whence $g \geq 15378928$ can be assumed from Theorem \ref{thm84g2}.  

Then, as $q$ is congruent to $1$ modulo $4$, $r \leq k \leq {\rm{log}}(q)/ {\rm{log}}(d) \leq {\rm{log}}(q)/ {\rm{log}}(5) \leq {\rm{log}}(q)/ (1.6)$. Hence,

$$(1.6)r \leq {\rm{log}}(q) \leq {\rm{log}}(\sqrt[5]{90(g-1)}+1) \leq \frac{\sqrt[5]{90(g-1)}}{(\sqrt[5]{90(g-1)}+1)^{1/2}} < \sqrt[10]{90(g-1)}.$$

Thus, 
$$|G| \leq r|\rm{PGU}(3,q)| <3 \frac{\sqrt[10]{90(g-1)}}{1.6} \cdot (595.21)(g-1)^{8/5}<(1750.24)(g-1)^{17/10}.$$

Since $(1750.24)(g-1)^{17/10}<766(g-1)^{7/4}$ for $g \geq 15378928$ we get a contradiction.

\item Hence, we can assume that $|H| \geq 30(g-1)$ and so Lemma \ref{22dic2015} applies to $H$. Since $\gamma(\cX)>0$, we have that $d=p$ and $S_{q^3}$ has exactly two short orbits wich are also the unique short orbits of $H$. 

Since $S_{q^3}$ is not abelian and a Sylow $p$-subgroup of $H$ must be elementary abelian by Theorem \ref{sylow}, we get a contradiction.

\end{itemize}

\textbf{Case 2:} $N \cong \PSL(3,q)$. As the smallest order of $\PSL(3,q)$ that can occur here is $\PSL(3,3)$ whose size is equal to $5616$, as before, $g\geq 10$ can be assumed.

Let $Q$ be a Sylow $d$-subgroup of $N$. Then $|Q| = q^3$ and by \cite[Theorem 2.4]{king} the normalizer $N_{\PSL(3,q)}(Q)$ of $Q$ in $\PSL(3,q)$ has size equal to $q^3(q-1)^2(q+1)/(3,q-1)$. In particular, for each case listed above $G$ admits a solvable subgroup of size $|N_{\PSL(3,q)}(Q)|$. 

By Lemma \ref{22dic2015}, we have that $q^3(q-1)^2(q+1)/(3,q-1) < 30(g-1)$, because otherwise $d=p$ and hence $Q$ would be elementary abelian from Theorem \ref{sylow}. Whence $q < \sqrt[6]{90(g-1)}+1$. 

From $|\PGL(3,q)|= q^3(q^3-1)(q^2-1)= (3,q-1)|\PSL(3,q)|$, we  get that if $G$ satisfies Case 1 or Case 2 then
$$
|G| \leq |\PGL(3,q)|= q^3(q^3-1)(q^2-1) =[q^3(q-1)^2(q+1)](q^2+q+1)< 90(g-1)(q^2+q+1) 
$$

$$
<180(g-1)q^2<180(g-1)(\sqrt[6]{90(g-1)}+1)^2<720(g-1)^{4/3}<290(g-1)^{7/4}, 
$$
as $g \geq 10$.

So we can assume that either Case 3 or Case 4 is satisfied. Then $|G| \leq r|\PGL(3,q)|$ where as before,
$$
(1.09)r \leq {\rm{log}}(3)r \leq {\rm{log}}(3)k \leq {\rm{log}}(q) \leq \frac{\sqrt[6]{90(g-1)}}{\sqrt{((90(g-1))^{1/6}+1)}}<\sqrt[12]{90(g-1)}.
$$
Thus, 
$$
|G| <\frac{\sqrt[12]{90(g-1)}}{1.09} \cdot 720(g-1)^{4/3}<(961.09)(g-1)^{17/12}<463(g-1)^{7/4},
$$
as $g \geq 10$, and the claim follows.
\end{proof}

For $G=N \cong \rm{Alt}_7$ and $G=N \cong \rm{M}_{11}$ we will make use of the following result, see \cite[Theorem 11.56]{hirschfeld-korchmaros-torres2008} and \cite[pagg 600-601]{nakajima 1987}.

\begin{theorem}\label{123}
Let $\cX$ be an ordinary curve, $G$ a finite subgroup of $\aut(\cX)$ and $\cY = \cX/G$. If $|G| > 84(g(\cX)-1)$, then one of the following holds.
\begin{enumerate}
\item[\rm{(i)}] $p \geq 3$ and $g(\cY) = 0$; $e_Q = 1$ if $Q \neq Q_1, Q_2, Q_3 \in \cY$, $p | e_{Q_1}$, $ e_{Q_2} = e_{Q_3} = 2$; 
\item[\rm{(ii)}] $g(\cY) = 0$; $e_Q = 1$ if $Q \neq Q_1, Q_2 \in \cY$; $p | e_{Q_1}, e_{Q_2}$;
\item[\rm{(iii)}] $g(\cY)= 0$; $e_Q = 1$ if $Q \neq Q_1, Q_2 \in \cY$; $p | e_{Q_1}$, $p \nmid e_{Q_2}$. 
\end{enumerate}
\end{theorem}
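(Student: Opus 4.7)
The plan is to combine the Hurwitz genus formula with the ordinariness constraint on higher ramification groups, and then enumerate the admissible ramification signatures of the cover $\cX\to\cY$. By Theorem~\ref{2i}, $G_P^{(2)}$ is trivial for every $P\in\cX$. Writing $q_Q:=|G_P^{(1)}|$ for any $P$ lying over $Q\in\cY$ (so $q_Q=1$ when $Q$ is tame, and $q_Q$ equals the $p$-part of $e_Q$, hence $q_Q\ge p$, when $Q$ is wild), one gets $d_Q=(e_Q-1)+(q_Q-1)$, so
$$\frac{d_Q}{e_Q} \;=\; \begin{cases} 1-\dfrac{1}{e_Q}, & p\nmid e_Q,\\[4pt] 1+\dfrac{q_Q-2}{e_Q}, & p\mid e_Q. \end{cases}$$
Thus every tame contribution lies in $[1/2,1)$, while every wild contribution lies in $[1,2)$, strictly exceeding $1$ as soon as $p\ge 3$.

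Grouping the Hurwitz formula (\ref{rhg}) by points of $\cY$ and dividing by $|G|$ gives
$$\frac{2g-2}{|G|} \;=\; 2g(\cY)-2+\sum_{Q\in\cY}\frac{d_Q}{e_Q}.$$
The hypothesis $|G|>84(g-1)$ makes the left-hand side strictly less than $1/42$. If $g(\cY)\ge 2$ the right-hand side is at least $2$, impossible; if $g(\cY)=1$ the ramification sum would have to be less than $1/42$, incompatible with any nonzero summand being at least $1/2$, so the cover would be unramified and $g=1$, excluded. Hence $g(\cY)=0$ and, setting $T:=\sum_Q d_Q/e_Q$,
$$2 \;<\; T \;<\; 2+\tfrac{1}{42}.$$

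Let $w$ and $t$ denote the numbers of wildly and tamely ramified points in $\cY$. Using the bounds above I would eliminate most signatures $(w,t)$ mechanically: $w\ge 3$, $(w,t)=(2,t)$ with $t\ge 1$, and $(1,t)$ with $t\ge 3$ each force $T\ge 5/2>2+1/42$; the cases $w+t\le 1$, $(0,2)$, and $(1,0)$ each give $T<2$; and the purely tame cases $w=0$ with $t\ge 3$ all yield $T-2\ge 1/42$ by the classical Hurwitz calculation (the equality $T-2=1/42$ being attained only at the spherical triangle signature $(2,3,7)$, for which $|G|=84(g-1)$), contradicting the strict inequality. Only the signatures $(w,t)\in\{(2,0),(1,1),(1,2)\}$ survive.

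Finally I would identify each surviving signature with one case of the theorem. The signature $(2,0)$ immediately gives case (ii), and $(1,1)$ gives case (iii). For $(1,2)$ a short computation shows that if either tame index $e_i\ge 3$, then the wild contribution would have to satisfy $1+(q-2)/e_{\mathrm{wild}}<1$, contradicting its lower bound; hence both tame indices equal $2$, which forces $p\ge 3$ since a tame ramification of index $2$ requires $p\ne 2$, yielding case (i). The main technical obstacle will be the careful bookkeeping in the elimination step---in particular, isolating the borderline tame triple $(2,3,7)$ where $T-2=1/42$ holds with equality and discarding it by the strict inequality in the hypothesis.
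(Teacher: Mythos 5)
Your proof is correct. The paper does not actually prove Theorem \ref{123}; it cites it from Nakajima and from Hirschfeld--Korchm\'aros--Torres, and your argument --- the Hurwitz formula combined with the vanishing of $G_P^{(2)}$ from Theorem \ref{2i} to pin each wild contribution in $[1,2)$, followed by the enumeration of ramification signatures and the classical $1/42$ minimum for tame signatures --- is essentially the standard proof given in those references, correctly using ordinariness to exclude the one-short-orbit case that survives in the non-ordinary version of the statement.
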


\begin{proposition}\label{alt7}
Let $G$ be a subgroup of $\aut(\cX)$. If $G = {\rm{Alt}_7}$, then $|G| <84(g-1)$, unless $p =5$ and $g(\cX) = 10$.
\end{proposition}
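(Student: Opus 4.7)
The strategy is a proof by contradiction. Assume $|G|=|\Alt_7|=2520\geq 84(g-1)$ and that $(p,g)\neq(5,10)$. The inequality gives $g\leq 31$, while Theorem~\ref{thm84g2} forces $g(g-1)\geq 30$, whence $g\geq 6$; combined with the hypothesis that $g$ is even, this leaves $g\in\{6,8,\ldots,30\}$. Since the classical Hurwitz bound is being violated, $p$ must divide $|G|=2^3\cdot 3^2\cdot 5\cdot 7$, and the hypothesis $p>2$ leaves $p\in\{3,5,7\}$.

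Next, I apply Theorem~\ref{123} to the Galois cover $\pi:\cX\to\cX/G$, obtaining $g(\cX/G)=0$ together with one of three ramification patterns. Since $\cX$ is ordinary, Theorem~\ref{2i} gives $G_P^{(2)}=1$ at every $P$, so the different exponent over a branch point $Q$ of $\pi$ equals $e_Q-1$ when $Q$ is tame and $e_Q+\nu_Q-2$ when $Q$ is wild, where $\nu_Q:=|G_P^{(1)}|$. Substituting into the Hurwitz genus formula (\ref{rhg}) with $|G|=2520$ turns the three cases of Theorem~\ref{123} into
\begin{align*}
\mathrm{(i)}\quad & g-1 = 1260(\nu_1-2)/e_1,\\
\mathrm{(ii)}\quad & g-1 = 1260\bigl[(\nu_1-2)/e_1+(\nu_2-2)/e_2\bigr],\\
\mathrm{(iii)}\quad & g-1 = 1260(\nu_1-2)/e_1-1260/e_2.
\end{align*}

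The admissible values of $\nu_Q$ are the orders of elementary abelian $p$-subgroups of $\Alt_7$, namely $\nu_Q\in\{3,9\}$ for $p=3$, $\nu_Q=5$ for $p=5$, and $\nu_Q=7$ for $p=7$. Each $e_Q$ must divide $|N_{\Alt_7}(G_P^{(1)})|$, which equals $72,36,20,21$ for $G_P^{(1)}\cong C_3,C_3\times C_3,C_5,C_7$ respectively; moreover the stabilizer at a tame branch point is cyclic of prime-to-$p$ order, hence of order at most $7$ inside $\Alt_7$. A direct enumeration of the finitely many arithmetically compatible candidates shows that cases (i) and (ii) produce no value of $g$ in $\{6,8,\ldots,30\}$: in (i) the admissible $e_1$ make $g-1$ either non-integral or strictly greater than $30$, while in (ii) the minimum value of $1/e_j$ is $1/72$, forcing $g-1\geq 35$. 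Case (iii) survives only for $p=5$, $\nu_1=5$, $e_1=20$, $e_2=7$, which gives $g=1+189-180=10$, exactly the excluded configuration.

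The main obstacle is the bookkeeping in the case analysis: for every arithmetically admissible divisor of the relevant normalizer order one must verify, using the well-known subgroup lattice of $\Alt_7$ and elementary normalizer computations for its Sylow subgroups, that a subgroup of $\Alt_7$ realising that value of $e_Q$ with the prescribed Sylow-$p$ part actually exists, and that the resulting $g$ has the required parity. The restriction of tame stabilizers to cyclic subgroups of order at most $7$ keeps this enumeration manageable.
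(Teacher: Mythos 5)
Your proof is correct and follows essentially the same route as the paper: reduce to $p\in\{3,5,7\}$, invoke Theorem \ref{123}, rewrite the Hurwitz formula using $G_P^{(2)}=1$, and enumerate the admissible ramification data to isolate the single surviving configuration $(p,g)=(5,10)$ with $e_1=20$, $e_2=7$. The only (harmless) difference is that you dispatch cases (i) and (ii) by direct arithmetic bounds from the normalizer orders, where the paper instead quotes Nakajima's explicit formulas and a computational check on solvable subgroups of ${\rm Alt}_7$.
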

\begin{proof} 
Note that it is sufficient to prove that $|G| < 84(g-1)$. In fact $|G|=|{\rm{Alt}_7}|=2520=84(g-1)$ if and only if $g=31$ which is odd, and hence this case cannot occur.

If $p >7$, then ${\rm{Alt}_7}$ is a tame automorphism group of $\cX$, whence our claim follows.

 If $p = 3,5,7$, a careful case-by-case analysis is needed. Assume by contradiction that $|G|>84(g-1)$.

As $|{\rm{Alt}_7}| = 2520$, the Hurwitz genus formula applied to the covering $\cX \rightarrow \cX/G = \cY$ reads
\begin{equation}\label{eqalt7}
\frac{2g-2}{2520} = -2 + \sum_{Q \in \cY}\frac{d_Q}{e_Q},
\end{equation}
as $g(\cY) = 0$. By Theorem \ref{123}, we have to deal with cases (i)-(iii).  Let $q_1E_1$ denote the order of the stabilizer $G_{Q_1}$ of $Q_1$ in $G$, with $q_1=p^k$, $k \geq 1$ and $p \nmid E_1$. Clearly, $G_{Q_1}$ is a solvable subgroup of $G \cong {\rm Alt}_7$.

Let case (i) above hold; 
then Equation \eqref{eqalt7} yields
$$
2520 = |{\rm{Alt}_7}| = \frac{2E_1q_1}{q_1-2}(g-1),
$$
see \cite[page 601 Case (I)]{nakajima 1987}.
We claim that, regardless the characteristic, the quantity $(2E_1q_1)/(q_1-2)$ is smaller than $84$. In fact, it can be verified by MAGMA that the largest solvable subgroup of $\rm{Alt}_7$ whose Sylow $p$-subgroup (for $p \in \{3,5,7\}$) has a cyclic complement has size equal to 36, whence the assertion follows. 

Next, assume that case (ii) holds. Then \eqref{eqalt7} yields
$$
2520 = |{\rm{Alt}_7}| \leq \frac{2E_1q_1}{q_1-2}(g-1),
$$
see \cite[page 601 Case (II)]{nakajima 1987}, and the same argument as in case (i) holds. 



If (iii) holds, then  \eqref{eqalt7} reads 
$$
\frac{g-1}{1260} = -2 + \frac{d_{Q_1}}{e_{Q_1}}+ \frac{d_{Q_2}}{e_{Q_2}}.
$$
\begin{itemize}
\item Let $p = 7$. Then a $7$-Sylow subgroup of $G$ is a cyclic group $C_7$ of order $7$ whose normalizer is a semidirect product $C_7 \rtimes C_3$ with  $C_3$ a cyclic $3$-group. 
Thus, either $(e_{Q_1}, d_{Q_1}) =(7,12)$, or $(e_{Q_1}, d_{Q_1}) = (21,26)$, while $$(e_{Q_2}, d_{Q_2}) \in \{(2,1),(3,2), (4,3), (5,4), (6,5)\}.$$ Note that, in order to get $g$ even, we must have $1260\bigl( -2 + \frac{d_{Q_1}}{e_{Q_1}}+ \frac{d_{Q_2}}{e_{Q_2}}\bigr) \geq 0$ odd. By direct checking, this can happen only if $e_{Q_2} = 4$ and $e_{Q_1}=7$ with $1260\bigl( -2 + \frac{d_{Q_1}}{e_{Q_1}}+ \frac{d_{Q_2}}{e_{Q_2}}\bigr)=585$. Since this implies that $g=586$ and hence $|G|<5(g-1)$, we get a contradiction.

\item Let $p = 5$. A  $5$-Sylow subgroup of ${\rm{Alt}_7}$ is a cyclic group $C_5$ of order $5$ with normalizer $D_{10}\cdot 2$, which is isomorphic to the general affine group $\rm{GA}(1,5)$. Here, there are three possibilities for the size of a non-tame one-point stabilizer, namely $5,10,20$.

Arguing as for the previous case, we get an even value for $g$ only if $(e_{Q_1},e_{Q_2},g-1) \in \{(5,4,441), (10,4,63), (20,7,9)\}$. In the first two cases $|G|<84(g-1)$ and we get a contradiction. Thus, $g=10$, $e_{Q_1}=20$ and $e_{Q_2}=7$.

\item Let $p =3$. The $3$-Sylow of ${\rm{Alt}_7}$ is an elementary abelian group $E$ of order $9$ with normalizer $C_3\rtimes S_3\cdot 2$. 

Since we are assuming $|{\rm{Alt}_7}| \geq 84(g-1)$, we have that $g \leq 31$. If $E_1 = 1$, then by \cite[Lemma 1]{nakajima 1987} we have $|{\rm{Alt}_7}| \leq 24(g-1)$. Next, assume $E_1 \geq 2$. 

If $q_1 = 3$, then $E_1 = 2$ by \cite[Proposition 1]{nakajima 1987}. This combined with \cite[Lemma 2, Eq. 4.4]{nakajima 1987} yields 
$
|{\rm{Alt}_7}| \leq 84(g-1). 
$
If $q_1 = 9$, we are left with two possibilities: $e_{Q_1} = 18$ and $e_{Q_1} = 36$ as there is no cyclic group of order $8$ in $\rm{Alt}_7$. Then Equation \eqref{alt7} reads
$$
\frac{g-1}{1260} = -2 + \frac{25}{18}+ \frac{e_{Q_2}-1}{e_{Q_2}}, \ {\rm or} \ \ \ \frac{g-1}{1260} = -2 + \frac{43}{36}+ \frac{e_{Q_2}-1}{e_{Q_2}},
$$
with $e_{Q_2} \in \{2,4,5,7\}$ according to the possible sizes of cyclic prime-to-3 groups in $\rm{Alt}_7$. In the former case, a computation shows we can get a non-negative even genus only if $e_{Q_2} = 4$, which gives $g = 176$. In the latter case, we can have a non-negative even genus only if $e_{Q_2} = 7$, which gives $g = 66$. Both these values for $g$ are not allowed, whence our claim follows. 
\end{itemize}
\end{proof}





Next, we show that the exceptional genus $10$ case in Proposition \ref{alt7} cannot actually occur. 

\begin{proposition}\label{prop:hyp7}
Let $C_5$ be a  Sylow $5$-subgroup of $\rm{Alt}_7$. If there exists an ordinary curve $\cC$  of genus $10$ in characteristic $5$ admitting $\rm{Alt}_7$ as an automorphism group, then the quotient curve $\cC/C_5$ is isomorphic to the hyperelliptic curve 
$$
\cZ: y^4 = x(x-1)^2(x-a)^2,
$$
for $a \in K\setminus\{0,1\}$. 
\end{proposition}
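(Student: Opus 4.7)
The plan is to deduce the structure of $\bar\cC := \cC/C_5$ from the ramification of $\cC \to \cC/G = \PG(1,K)$ established in the proof of Proposition \ref{alt7} for the residual case $(p,g,e_{Q_1},e_{Q_2})=(5,10,20,7)$. The only points of $\cC$ with non-trivial $G$-stabilizer are the $126$ points over $Q_1$ (each with stabilizer of order $20$) and the $360$ points over $Q_2$ (each stabilized by a Sylow $7$-subgroup). Since every order-$20$ subgroup of $\rm{Alt}_7$ has a unique Sylow $5$-subgroup, and $N := N_G(C_5) \cong C_5 \rtimes C_4$ is self-normalizing of index $126$ in $G$, the chosen $C_5$ fixes exactly one point $P \in \cC$, namely the unique point with $G_P = N$. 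Using $G_P^{(2)}=1$ from Theorem \ref{2i}, the Hurwitz genus formula applied to $C_5$ acting on $\cC$ yields $18 = 10\,g(\bar\cC) - 10 + 8$, whence $g(\bar\cC)=2$; in particular $\bar\cC$ is hyperelliptic, with hyperelliptic involution $\iota$.

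Next I would show $C_4 = N/C_5$ acts faithfully on $\bar\cC$ and use a Hurwitz count to locate $\iota$ inside this $C_4$. For a point $R \in \cC$ with trivial $G$-stabilizer (which exists since only finitely many points are ramified), an element $\beta \in N$ acting trivially on $\bar\cC$ satisfies $\beta R \in C_5 R$, forcing $\beta \in C_5$; hence the kernel of $N \to \Aut(\bar\cC)$ is exactly $C_5$. Let $\alpha$ generate $C_4$; since $N$ fixes $P$, its image $\bar P$ is $\alpha$-fixed. The tame Hurwitz formula for $C_4$ acting on $\bar\cC$ reads
\[ 10 = 8\,g'' + 3\,n_1 + 2\,n_2, \]
where $g'' = g(\bar\cC/C_4)$, $n_1$ is the number of $\alpha$-fixed points and $n_2$ the number of $C_4$-orbits of length $2$. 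The constraint $n_1 \geq 1$ together with integrality pins down the unique solution $(g'',n_1,n_2) = (0,2,2)$. Hence $\alpha^2$ fixes $n_1 + 2n_2 = 6$ points of $\bar\cC$, and since the hyperelliptic involution is the only automorphism of a genus-$2$ curve with $2g+2 = 6$ fixed points, $\alpha^2 = \iota$.

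Finally, to produce the explicit model, I observe that the induced involution $\bar\alpha$ on $\bar\cC/\langle\iota\rangle \cong \PG(1,K)$ has exactly two fixed points, which are the images of the two $\alpha$-fixed Weierstrass points of $\bar\cC$. Choosing coordinates on $\PG(1,K)$ so that these fixed points are $0$ and $\infty$ makes $\bar\alpha$ act as $x \mapsto -x$; the remaining four Weierstrass points then project to two $\bar\alpha$-orbits $\{\pm b_1\}$ and $\{\pm b_2\}$ with $b_1 b_2 \neq 0$ and $b_1^2 \neq b_2^2$. Rescaling so that $b_1 = 1$ and setting $a := b_2^2 \notin \{0,1\}$, and using that $\infty$ is itself a Weierstrass point, $\bar\cC$ admits the degree-$5$ hyperelliptic model $y^2 = x(x^2-1)(x^2-a)$. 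Squaring both sides and substituting $X = x^2$ gives $y^4 = X(X-1)^2(X-a)^2$, producing the desired isomorphism $\bar\cC \cong \cZ$. The main obstacle is the Hurwitz count in the second paragraph, which must simultaneously pin down $(n_1, n_2)$ and force $\alpha^2 = \iota$; once this is in hand, the final coordinate reduction is essentially mechanical.
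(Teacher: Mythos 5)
Your proposal is correct, and it reaches the model $\cZ_a$ by a genuinely different route at the key step. The paper, after the same Hurwitz computation giving $g(\bar\cC)=2$ with a single totally ramified point, applies the Deuring--Shafarevich formula to conclude $\bar\cC$ is ordinary, and then splits into the two cases $\bar C_4\hookrightarrow \aut(\bar\cC)/\langle\iota\rangle$ versus $\alpha^2=\iota$: the first is excluded by invoking the Shaska--V\"olklein classification (the only candidate being $y^2=x^5-1$, which is not ordinary), and the second is handled by a Hurwitz count for $\bar\cC\to\bar\cC/\bar C_4$ plus Kummer theory. You instead prove $\alpha^2=\iota$ directly: the faithfulness of the $C_4$-action (via a point with trivial stabilizer), the tame Hurwitz count $10=8g''+3n_1+2n_2$ with $n_1\ge 1$ forcing $(g'',n_1,n_2)=(0,2,2)$, and the observation that an involution of a genus-$2$ curve with $6$ fixed points must be hyperelliptic. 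This is more self-contained and elementary: it avoids both the Deuring--Shafarevich step and the external classification (whose use in characteristic $5$ is in fact delicate, since $y^2=x^5-1$ degenerates there), at the cost of not recording the ordinarity of $\bar\cC$, which the paper later exploits to discard $a=-1$. Your final coordinate reduction through the odd-degree hyperelliptic model $y^2=x(x^2-1)(x^2-a)$ and the substitution $X=x^2$ is equivalent to the paper's Kummer-theoretic derivation of the degree-$4$ cyclic cover with two totally ramified points and two orbits of length $2$. One small suggestion: your group-theoretic derivation of $s=1$ (via the self-normalizing subgroups of order $20$) is fine but unnecessary, since the equation $18=10(g(\bar\cC)-1)+8s$ already has $(2,1)$ as its only admissible solution.
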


\begin{proof}
Let $C_5$ be the Sylow $5$-subgroup of $\rm{Alt}_7$. Then the Riemann-Hurwitz formula applied to the covering $\cC\rightarrow \cC/C_5= \bar{\cC}$ reads
\begin{equation}\label{eq:alt7sper}
18 = 10(g(\bar{\cC})-1) + 8s,
\end{equation}

where $s$ is the number of points of $\cC$ that are fixed by $C_5$. It is easily seen that Equation \eqref{eq:alt7sper} is satisfied only for $g(\bar{\cC}) = 2$ and $s = 1$. Further, applying the Deuring-Shafarevic formula to $\cC\rightarrow \bar{\cC}$ we find
\begin{equation}\label{eq:alt7sper:ds}
9 = 5(\gamma(\bar{\cC})-1) + 4,
\end{equation}
whence $\gamma(\bar{\cC}) = 2$ easily follows. As the normalizer of $C_5$ in $\rm{Alt}_7$ is a semidirect product $C_5 \rtimes C_4$, where $C_4$ is a cyclic group of order $4$, it follows that $\bar{\cC}$ is a hyperelliptic genus $2$ curve with a cyclic automorphism group  $\bar{C}_4$ of order $4$. Let $i$ be the hyperelliptic involution of $\aut(\bar{\cC})$, and $\alpha$ a generator for $\bar{C}_4$. Then either  $\bar{C}_4 \leq  \aut(\bar{\cC})/\langle i \rangle$ or $\alpha^2 = i$. 

Assume that the former case holds. In \cite[Section 3.2]{shaska}, it is shown that over a field of characteristic $p \neq 2$, there is up to isomorphism only one curve with such an automorphism group, which is isomorphic to $\mathcal{Y}: y^2 = x^5-1$. As the $5$-rank of $\cY$ is equal to $0$, we get a contradiction.  

If the latter case holds, since we have that the genus of $\bar{\cC}/\bar{C_4}$ is equal to 0, applying the Riemann-Hurwitz formula to the covering $\bar{\cC} \rightarrow \cC/\bar{C_4}$ we get
$$
10 = 3s+2t,
$$
where $s$ is the number of points that are fixed by $\bar{C}_4$ and $t$ is the number of short orbits of size $2$. Trivially, $(s,t) = (2,2)$ is the only possible solution. By a standard argument of Kummer theory, we then have that $\bar{\cC}$ must be isomorphic to the curve 
$$
\cZ_a: y^4 = x(x-1)^2(x-a)^2,
$$
for $a \neq 0,1$, and our claim follows. It can be checked through MAGMA that, for $a \neq -1$,  $\cZ_a$ is isomorphic to a  curve with affine equation
$$
\cZ': y^2 = a_5x^5+a_3x^3+a_0,
$$
 which has $5$-rank equal to $2$. The case $a = -1$ can be discarded as $\cZ_{-1}$ has $5$-rank zero. 
\end{proof}


\begin{theorem}
There is no genus $10$ ordinary curve in characteristic $5$ with ${\rm Alt}_7$ as an automorphism group. 
\end{theorem}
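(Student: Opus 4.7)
Suppose for contradiction that such a curve $\cC$ exists. By Proposition~\ref{prop:hyp7}, the quotient $\bar{\cC}:=\cC/C_5$ is isomorphic to the hyperelliptic curve $\cZ_a$ of genus $2$, for some $a \in K \setminus \{0,1,-1\}$; after the change of variable $w^2 = x$, an affine model is $y_o^2 = w(w^2-1)(w^2-a)$. In characteristic $5$, the $C_5$-Galois cover $\pi: \cC \to \bar{\cC}$ is Artin--Schreier, i.e.\ $K(\cC) = K(\bar{\cC})(y)$ with $y^5 - y = f$ for some $f \in K(\bar{\cC})$, well-defined modulo $\wp(K(\bar{\cC}))$, where $\wp(u)= u^5 - u$. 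From the Riemann--Hurwitz computation in Proposition~\ref{prop:hyp7}, the cover has a single ramified point $\bar{P} \in \bar{\cC}$ with different exponent $d_{\bar{P}} = 8$; the Artin--Schreier formula $d_{\bar{P}} = (m+1)(p-1)$ then forces the reduced pole order of $f$ at $\bar{P}$ to be exactly $m = 1$, while $f$ is unramified elsewhere.

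The first step is to pin down $f$ using the descent of $F_{20} := N_{\rm{Alt}_7}(C_5) \cong C_5 \rtimes C_4$ to $C_4 \subset \aut(\bar{\cC})$. A generator $\bar{\alpha}$ of $C_4$ acts as $(w, y_o) \mapsto (-w, iy_o)$, squares to the hyperelliptic involution, and fixes the two Weierstrass points $\bar{P} = (0,0)$ and $\bar{P}' = \infty$. Compatibility of the $F_{20}$-action on $\cC$ with the cover requires $\bar{\alpha}(f) \equiv \lambda f \pmod{\wp(K(\bar{\cC}))}$ for some $\lambda \in \mathbb{F}_5^*$. Working locally at $\bar{P}$ with uniformizer $t = y_o$, the equation $y_o^2 = w(w^2-1)(w^2-a)$ yields $w = t^2/a + \frac{1+a}{a^4}t^6 + O(t^{10})$, and a direct computation shows that the function $f_0 := y_o/w^3$ lies in $L(5\bar{P})$, has Laurent expansion $a^3 t^{-5} - 3(1+a)t^{-1} + O(t^3)$ at $\bar{P}$, and satisfies $\bar{\alpha}(f_0) = -i f_0$ with $-i \in \mathbb{F}_5^*$ (since $\mathbb{F}_5$ contains a primitive $4$th root of unity); its $\wp$-reduction at $\bar{P}$ has pole order exactly $1$ provided $2(1+a) + a^{3/5} \ne 0$ in $K$. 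Up to $\wp$-equivalence and scaling by $\mathbb{F}_5^*$, this parametrizes the cover $\cC$ purely by $a$.

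The main obstacle is the second step: ruling out every admissible value of $a$ by exploiting subgroups of $\rm{Alt}_7$ that do not normalize $C_5$. A natural tool is the Sylow $7$-subgroup $C_7$, with normalizer $C_7 \rtimes C_3$ in $\rm{Alt}_7$. By the analysis of short orbits carried out in the proof of Proposition~\ref{alt7}, $C_7$ has exactly three fixed points on $\cC$; Riemann--Hurwitz applied to $\cC \to \cC/C_7$ then yields $g(\cC/C_7) = 1$, and since tame quotients preserve ordinarity, the quotient is an ordinary elliptic curve $E$ on which $C_3$ acts freely (necessarily by translation by a nontrivial $3$-torsion point). The $j$-invariant $j(E)$ is a rational function of $a$, computable from the explicit model of $\bar{\cC}$ together with the Artin--Schreier cover $y^5 - y = f_0$. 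Requiring $j(E)$ to be compatible with the prescribed inertia structure $(F_{20}, C_7)$ at the two branch points of $\cC \to \cC/\rm{Alt}_7 \cong \mathbb{P}^1$ imposes additional polynomial conditions on $a$ over $\mathbb{F}_5$; verifying that the combined system of these conditions together with $2(1+a) + a^{3/5} \ne 0$ has no solution in $K \setminus \{0,1,-1\}$ is the technical heart of the argument and can be carried out by an explicit symbolic computation in the same spirit as Remark~\ref{possibilita} and the final paragraph of the proof of Proposition~\ref{prop:hyp7}.
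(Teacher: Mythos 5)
Your argument does not reach a contradiction: it stops exactly where the proof would have to begin. After setting up the Artin--Schreier description of $\cC\to\cC/C_5$ and proposing constraints coming from $C_7$, you write that the resulting system of polynomial conditions on $a$ ``has no solution'' and that verifying this ``can be carried out by an explicit symbolic computation'' --- but that computation is neither performed nor is any evidence given that it terminates in the empty set. Until that is done, nothing has been proved. Moreover, the set-up feeding into that computation is itself not pinned down. You claim that, up to $\wp$-equivalence and scaling by $\mathbb{F}_5^*$, the cover is parametrized ``purely by $a$'' via the single function $f_0=y_o/w^3$. This is false as stated: since $\bar{\cC}$ is ordinary of genus $2$, there are already $5^2=25$ unramified $\mathbb{Z}/5$-covers of $\bar{\cC}$, and more generally the classes $f\in K(\bar{\cC})/\wp(K(\bar{\cC}))$ with local reduced pole order $1$ at $\bar{P}$ and no other ramification form much more than a single $\mathbb{F}_5^*$-orbit (local $\wp$-reductions at $\bar P$ need not be realized by a global function, and the unramified part of the class group contributes nontrivially). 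The equivariance condition $\bar{\alpha}(f)\equiv\lambda f$ cuts this down but you have not shown it cuts it down to the one-parameter family you use; several of the intermediate assertions (the eigenvalue of $f_0$, the claim that $C_3$ acts freely on the elliptic quotient $\cC/C_7$, the meaning of ``compatibility'' of $j(E)$ with the inertia structure) are also left unverified.

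For comparison, the paper's proof avoids all of this by never touching the Artin--Schreier layer: it analyzes the prime-to-$p$ quotient $\cC\to\cC/C_4$ instead, where $C_4$ is the complement of $C_5$ in $N_{{\rm Alt}_7}(C_5)$. Riemann--Hurwitz forces $g(\cC/C_4)=0$ with two fixed points and ten orbits of length $2$, so $K(\cC)=K(u,v)$ is a Kummer extension $u^4=v\prod_{i=1}^{10}(v-a_i)^2$ of a rational field; the explicit function $f=u^2/\bigl(v\prod_{i=1}^{10}(v-a_i)\bigr)$ has divisor $2P_\infty-2P_0$, so $\cC$ is hyperelliptic, contradicting the simplicity of ${\rm Alt}_7$ (the hyperelliptic involution is central in the full automorphism group). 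If you want to salvage your approach, you would need to (i) rigorously enumerate the admissible Artin--Schreier classes $f$, not just exhibit one candidate, and (ii) actually carry out and document the elimination over $a$; as it stands the proposal is a research plan, not a proof.
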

\begin{proof} By contradiction, assume that such a curve $\cC$ exists. By the proof of Proposition \ref{prop:hyp7}, the quotient curve $\bar{\cC}$ is isomorphic to the hyperelliptic curve 
$$
\cZ_a: y^4 = x(x-1)^2(x-a)^2. 
$$
Note that the unique non-tame short orbit of ${\rm Alt}_7$ on $\cC$ must be partitioned into $C_4$-orbits. As  $C_4$ normalizes $C_5$ in ${\rm Alt}_7$, then $C_4$ must fix the only point that is fixed by $C_5$. Also, the two short orbits of size $3$ of the quotient group $\bar{C}$ on $\bar{\cC}$ give rise to $10$ $C_4$-short orbits on $\cC$. Applying the Riemann-Hurwitz formula to the covering $\cC\rightarrow \tilde{C} = \cC/C_4$, we get 
$$
18 = 8(\tilde{g}-1) + 20+3+\Delta,
$$
where $\tilde{g} = \g(\tilde{C})$. A computation shows $\tilde{g} = 0$ and $\Delta = 3$. In particular, $C_4$ fixes two points and has $10$ short orbits of order $2$ on $\cC$. 

 This means that the function field $K(\cC)$ is a Kummer extension of degree $4$ of a rational function field. By  a standard argument in Kummer Theory, we get, $K(\cC) = K(u,v) $ with $u,v$ satisfying 
$$
u^4 = v\prod_{i = 1}^{10}(v-a_i)^2,
$$
for pairwise distinct, non zero $a_i$s in $K$.

 Let $P_0, P_\infty$ be two places of $K(\cC)$ centered at the points $\bar{P}_0, \bar{P}_\infty$ fixed by $C_4$; also, let $$f = \frac{u^2}{v\prod_{i = 1}^{10}(v-a_i)}.$$  A computation shows  
 $$
 (f) = 2P_\infty -2P_0.
 $$
 Therefore, $\cC$ is hyperelliptic, a contradiction since ${\rm Alt}_7$ is simple and the automorphism group of a hyperelliptic curve has a central involution. 

\end{proof}

We can now state the following Corollary. 

\begin{corollary}
Let $G$ be a subgroup of $\aut(\cX)$. If $G = {\rm{Alt}_7}$, then $|G| <84(g-1)$. 
\end{corollary}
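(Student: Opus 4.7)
The plan is to read the corollary off directly from the two results that precede it. My first step is to quote Proposition \ref{alt7}, which already establishes the bound $|G| < 84(g-1)$ in every situation covered by the hypotheses of the corollary, with the single potentially exceptional configuration $p = 5$ and $g(\cX) = 10$ left open. My second step is to invoke the Theorem stated immediately before the corollary, which asserts that no ordinary curve of genus $10$ in characteristic $5$ admits $\mathrm{Alt}_7$ as an automorphism group. Combining these two statements closes the last remaining gap and yields the corollary.

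Since all the technical work has already been done, there is no real obstacle to overcome: the tame-characteristic case $p > 7$ is handled by the classical Hurwitz bound, the small-characteristic cases $p \in \{3,5,7\}$ have been reduced in Proposition \ref{alt7} via a careful case-by-case analysis based on Theorem \ref{123} and the Hurwitz genus formula, and the exceptional genus $10$ case was eliminated through the geometric description of the quotient $\cC/C_5$ in Proposition \ref{prop:hyp7} followed by the hyperelliptic obstruction in the preceding Theorem (the central involution of a hyperelliptic automorphism group being incompatible with the simplicity of $\mathrm{Alt}_7$).

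The only point worth mentioning explicitly, for the sake of clarity, is why the strict inequality $|G| < 84(g-1)$ holds rather than just $|G| \leq 84(g-1)$: the equality $|\mathrm{Alt}_7| = 2520 = 84(g-1)$ forces $g = 31$, which is odd and therefore excluded by the standing assumption that $g(\cX)$ is even. This observation was already made at the beginning of the proof of Proposition \ref{alt7} and can simply be recalled here.
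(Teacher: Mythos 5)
Your proposal is correct and matches the paper's intent exactly: the corollary is stated immediately after the theorem eliminating the $p=5$, $g=10$ case precisely so that it follows by combining that theorem with Proposition \ref{alt7}, and your remark on strictness (via $g=31$ being odd) is the same one made at the start of the proof of Proposition \ref{alt7}. Nothing further is needed.
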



\begin{proposition}\label{m11}
Let $G$ be a subgroup of $\aut(\cX)$. If $G = {\rm{M}_{11}}$, then $|G| < 84(g-1)$ unless $p = 3$ and $g = 26$.
\end{proposition}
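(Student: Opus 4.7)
The plan is to mimic the proof of Proposition \ref{alt7}. Since $|{\rm M}_{11}|=7920=2^4\cdot 3^2\cdot 5\cdot 11$ and we work in odd characteristic, only $p\in\{3,5,11\}$ can make $G$ non-tame; otherwise the classical Hurwitz bound is immediate. Assume by contradiction that $|G|\ge 84(g-1)$, so $g\le 95$. By Theorem \ref{123}, $\cX/G$ has genus $0$ and the ramification pattern is of one of the types (i), (ii), (iii). Write $|G_{Q_1}|=q_1 E_1$ with $q_1=p^k$ and $\gcd(E_1,p)=1$. By \cite[Theorem 11.74]{hirschfeld-korchmaros-torres2008}, $G_{Q_1}^{(1)}$ is the unique Sylow $p$-subgroup of $G_{Q_1}$ and the complement is cyclic of order $E_1$; by Theorem \ref{2i} we have $G_{Q_1}^{(2)}=1$, hence $d_{Q_1}=e_{Q_1}+q_1-2$.

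The first step is to enumerate the admissible pairs $(q_1,E_1)$ using the subgroup structure of ${\rm M}_{11}$ and the inequality $E_1\le q_1-1$ of \cite[Proposition 1]{nakajima 1987}. For $p=11$, $N_{{\rm M}_{11}}(C_{11})\cong C_{11}\rtimes C_5$, whence $E_1\in\{1,5\}$. For $p=5$, $N_{{\rm M}_{11}}(C_5)\cong C_5\rtimes C_4$, whence $E_1\in\{1,2,4\}$. For $p=3$ the Sylow $3$-subgroup is elementary abelian of order $9$ with normalizer ${\rm M}_9\cdot 2$ of order $144$; since $E_1$ must be prime to $3$ and realized as the order of a cyclic element of this normalizer, $E_1\in\{1,2,4,8\}$ when $q_1=9$ and $E_1\in\{1,2\}$ when $q_1=3$.

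Cases (i) and (ii) of Theorem \ref{123} are then handled exactly as in Proposition \ref{alt7}: the Hurwitz genus formula yields $|G|\le \frac{2q_1 E_1}{q_1-2}(g-1)$, and for each admissible pair the ratio $\frac{2q_1 E_1}{q_1-2}$ is strictly less than $84$ (its maximum, $\frac{144}{7}$, is attained at $p=3$, $(q_1,E_1)=(9,8)$). This contradicts $|G|\ge 84(g-1)$.

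The crux of the argument, and the only place where the exception surfaces, is case (iii). Substituting $d_{Q_1}/e_{Q_1}=1+(q_1-2)/(q_1 E_1)$ and $d_{Q_2}/e_{Q_2}=1-1/e_{Q_2}$ into the Hurwitz genus formula gives
\[
g-1 \;=\; 3960\left(\frac{q_1-2}{q_1 E_1}-\frac{1}{e_{Q_2}}\right),
\]
where $e_{Q_2}$ is a prime-to-$p$ cyclic element order of ${\rm M}_{11}$, hence $e_{Q_2}\in\{2,3,4,5,6,8,11\}$. The main obstacle is carrying out the finite case analysis: for each admissible triple $(q_1,E_1,e_{Q_2})$ one solves for $g$ and checks whether the result is a positive even integer with $g\le 95$. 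The arithmetic produces a unique surviving triple, $p=3$, $q_1=9$, $E_1=8$, $e_{Q_2}=11$, giving $g=26$, which is precisely the exception stated.
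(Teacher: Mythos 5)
Your proposal is correct and follows essentially the same route as the paper: reduce to $p\in\{3,5,11\}$, invoke Theorem \ref{123}, dispose of cases (i)--(ii) via the bound $\frac{2q_1E_1}{q_1-2}<84$, and run the finite arithmetic in case (iii), where your parametrized formula $g-1=3960\bigl(\frac{q_1-2}{q_1E_1}-\frac{1}{e_{Q_2}}\bigr)$ is just a cleaner packaging of the paper's list of pairs $(e_{Q_i},d_{Q_i})$ and does single out $(q_1,E_1,e_{Q_2})=(9,8,11)$, $g=26$ as the only even solution compatible with $g\le 95$. The only cosmetic omission is the remark that $|G|=84(g-1)$ is impossible since $84\nmid 7920$, which is needed to pass from the contradiction hypothesis $|G|\ge 84(g-1)$ to the strict hypothesis of Theorem \ref{123} and to the strict conclusion $|G|<84(g-1)$.
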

\begin{proof} 
We note that it is sufficient to prove that $|G| \leq 84(g-1)$ as $84$ does not divide $|G|=|\rm{M}_{11}| = 7920$.

If $p \neq 3,5,11$, then $G$ is a tame automorphism group of $\cX$, whence our claim follows. 

 If $p = 3,5,11$, a careful case-by-case analysis is needed.  Assume by contradiction that $|G|>84(g-1)$.

As $|\rm{M}_{11}| = 7920$, the Hurwitz genus formula applied to the covering $\cX \rightarrow \cX/G = \cY$ reads  
\begin{equation} \label{eqm11}
\frac{g-1}{3960} = -2 + \sum_{Q \in \cY}\frac{d_Q}{e_Q}.
\end{equation}

If case (i) or (ii) holds, arguing as in Proposition \ref{alt7} (and keeping the same notation) we have 
$$
7920 = |{\rm{M}_{11}}| \leq \frac{2E_1q_1}{q_1-2}(g-1). 
$$
Again, the quantity $(2E_1q_1)/(q_1-2)$ is smaller than $84$ as $(E_1q_1,q_1) \in \{(72,9),(55,11)\}$ or $E_1q_1 \leq 36$. 

Next, assume that case (iii) holds; then \eqref{eqm11} reads
$$
\frac{g-1}{3960} = -2 + \frac{d_{Q_1}}{e_{Q_1}}+ \frac{d_{Q_2}}{e_{Q_2}}.
$$
\begin{itemize}
\item Let $p = 11$. A Sylow $11$-subgroup of $\rm{M}_{11}$ is a cyclic group $C_{11}$ of order $11$ whose normalizer is the semidirect product $C_{11} \rtimes C_5$ where  $C_5$ is a cyclic $5$-group, 
with $(e_{Q_1}, d_{Q_1}) \in \{(11,20), (55,64)\}$, while $(e_{Q_2}, d_{Q_2}) \in \{(2,1),(3,2), (4,3), (5,4),(6,5), (8,7)\}.$ 

Then Equation \eqref{m11} reads
$$
\frac{g-1}{3960} = -2 + \frac{20}{11}+ \frac{e_{Q_2}-1}{e_{Q_2}}, \ \ {\rm{or}} \ \  \frac{g-1}{3960} = -2 + \frac{64}{55}+ \frac{e_{Q_2}-1}{e_{Q_2}}.
$$
The only values for $e_{Q_2}$ and $e_{Q_1}$ yielding a non-negative even genus are $e_{Q_2} = 8$ and $e_{Q_1}=11$, which give $g = 2746$ and $|G|<84(g-1)$, a contradiction.

\item Let $p = 5$. A Sylow $5$-subgroup of $\rm{M}_{11}$ is a cyclic group $C_5$ of order $5$ with normalizer $D_{10}\cdot 2$, which is isomorphic to the general affine group $\rm{GA}(1,5)$. 

Here, there are three possibilities for the size of a non-tame one-point stabilizer, namely $5,10,20$. Arguing as before, to get a non-negative even value for $g$, the only possibilities are $(e_{Q_1},e_{Q_2},g-1) \in \{(5,8,1881), (10,8,693), (20,8,99) \}$. Since $|G|=7920>84(g-1)$ implies $g \leq 94$ we get a contradiction.

\item Let $p = 3$. Then the Sylow $3$-subgroup of $\rm{M}_{11}$ is an elementary abelian group $E$ of order $9$ whose normalizer is the semidirect product $E \rtimes SD_{16}$ where $SD_{16}$ is a semidihedral group of order 16. 


 In this case, $(e_{Q_2}, d_{Q_2}) \in \{(2,1), (4,3), (5,4), (8,7), (11,10)\}$ 
while $(e_{Q_1},d_{Q_1}) \in \{(18,25),$ $ (9,16), (6,7), (72,79), (36,43), (3,4)\}$.
Note that, apart from the case $(e_{Q_2}, d_{Q_2})=(8,7)$, in all the other cases we get an even genus if and only if $(e_{Q_1}, d_{Q_1}) = (72,79)$. If this is the case, the above formula yields a nonnegative value if and only if $(e_{Q_2}, d_{Q_2}) = (11,10)$, which yields $g = 26$. We now prove that the case $(e_{Q_2}, d_{Q_2}) = (8,7)$ is impossible. In fact if $(e_{Q_2}, d_{Q_2}) = (8,7)$ then $g \geq 2$ is even if and only if $(e_{Q_1},g-1) \in \{(18,1045), (9,2585), (6,165), (36,275), (3,825)\}$. Since $g >94$, we have that $7920=|G|<84(g-1)$, and hence in all these cases we get a contradiction.

\end{itemize}
\end{proof}

\section{On ordinary curves of genus 26 admitting $\rm{M}_{11}$ when p =3}\label{sec:modular}
In this Section, the characteristic of the ground field $K$ is assumed to be $p =3$. We deal with the existence of ordinary genus $26$ curves with $\rm{M}_{11}$ as an automorphism group. 
For instance,  the modular curve $X(11)$ has genus $26$ and in characteristic $3$, one has $\aut(X(11)) \cong \rm{M}_{11}$; see  \cite{adler 1997, rajan}. A plane model for $X(11)$ is given by 
$$
X(11): y^{10}(y+1)^9= x^{22}-y(y+1)^4x^{11}(y^3+2y+1),
$$
see \cite[Section 4.3]{yy}. 
By direct MAGMA computation it can be checked that  $\gamma(X(11)) = g(X(11)) = 26$.
Further, the following holds. 

\begin{proposition}
Let $\mathcal{X}$ be an ordinary curve of genus $26$ such that $\rm{M}_{11} \subseteq \aut(\mathcal{X})$. Denote by $E$ the Sylow $3$-subgroup of $\rm{M}_{11}$. Then the quotient curve $\cX/E$ is isomorphic to the hyperelliptic genus $2$ ordinary curve 
$$
\cY: y^2 = x^5-x. 
$$
\end{proposition}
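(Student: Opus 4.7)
The plan is to analyze the cover $\cX \to \bar{\cX} := \cX/E$ via the Deuring-Shafarevich and Riemann-Hurwitz formulas, use the induced action of $N_{\rm{M}_{11}}(E)/E \cong SD_{16}$ on $\bar{\cX}$ to force its genus, and then identify the resulting hyperelliptic curve through its automorphism group.

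First I would apply both the Riemann-Hurwitz formula and the Deuring-Shafarevich formula to $\cX \to \bar{\cX}$. By Theorem \ref{2i}, for every $P \in \cX$ the second ramification group $E_P^{(2)}$ is trivial, so $d_P = 2(|E_P|-1)$; since $|E|=9$, every short $E$-orbit has length $1$ or $3$. Writing $\bar{g} = g(\bar{\cX})$, $\bar{\gamma}=\gamma(\bar{\cX})$, and letting $a, b$ denote the numbers of short orbits of lengths $1$ and $3$ respectively, the two formulas give
\[
8a+6b \;=\; 34-9\bar{g} \;=\; 34-9\bar{\gamma}.
\]
This forces $\bar{g}=\bar{\gamma}$, so $\bar{\cX}$ is automatically ordinary, and a short case analysis on the Diophantine equation leaves only the two possibilities $(\bar{g},a,b)\in\{(0,2,3),(2,2,0)\}$.

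Next I would show that the induced action of $N_{\rm{M}_{11}}(E)/E \cong SD_{16}$ on $\bar{\cX}$ is faithful. If $g\in N_{\rm{M}_{11}}(E)$ acts trivially on $\bar{\cX}$ then $gP \in EP$ for every $P\in\cX$; choosing any $P$ outside the two short $\rm{M}_{11}$-orbits (so that its $\rm{M}_{11}$-stabilizer is trivial) forces $g\in E$. This allows one to rule out $\bar{g}=0$: by Dickson's classical description of the finite subgroups of $\PGL(2,K)$, the $2$-subgroups of $\aut(\mathbb{P}^1)$ in odd characteristic are either cyclic or dihedral, whereas $SD_{16}$ is neither. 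Hence $\bar{g}=\bar{\gamma}=2$, and since every curve of genus $2$ is hyperelliptic, $\bar{\cX}$ carries a central hyperelliptic involution $\tau$ which, by centrality, must coincide with the unique involution of $Z(SD_{16})$.

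The induced faithful action of $SD_{16}/\langle\tau\rangle \cong D_4$ on $\bar{\cX}/\langle\tau\rangle\cong \mathbb{P}^1$ permutes the six branch points of the hyperelliptic double cover. The cyclic subgroup $C_4\leq D_4$ fixes exactly two points of $\mathbb{P}^1$ and has all other orbits of length $4$, so the only possible partition of the six branch points is $6=1+1+4$. After normalizing the two $C_4$-fixed points to $0$ and $\infty$ and putting a non-cyclic involution of $D_4$ in the form $x\mapsto 1/x$ (possible because $K$ is algebraically closed), the remaining task is to identify the $C_4$-orbit of the other four branch points. The main obstacle is precisely this last step: imposing invariance under $x\mapsto 1/x$ on a $C_4$-orbit $\{x_0,ix_0,-x_0,-ix_0\}$ yields $x_0^2\in\{\pm 1,\pm i\}$, leaving only the two candidate orbits $\{\pm 1,\pm i\}$ and $\{\pm \zeta_8,\pm\zeta_8^3\}$ of primitive eighth roots of unity; a direct check using the change of variable $x\mapsto \zeta_8 x$ shows that these two possibilities give isomorphic hyperelliptic curves, both defined (up to isomorphism) by $y^2=x(x^2-1)(x^2+1)=x^5-x$, which is the claim.
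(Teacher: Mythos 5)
Your proof is correct, and it reaches the same two landmarks as the paper (the quotient $\cX/E$ has genus $2$ and carries a faithful $SD_{16}$-action), but by a genuinely different route. The paper gets $g(\cX/E)=2$ from the global $\rm{M}_{11}$-orbit structure established in the proof of Proposition \ref{m11}: a non-tame short orbit of $110$ points with ramification index $72$, a counting argument over the $55$ Sylow $3$-subgroups showing each fixes exactly two points, and then Riemann--Hurwitz; the final identification is a citation of \cite[Section 3.2]{shaska}. You instead run Riemann--Hurwitz and Deuring--Shafarevich directly on the degree-$9$ cover, which buys two things: the quotient is shown to be \emph{ordinary} (the proposition asserts this for $\cY$ but the paper's proof never addresses it), and the argument is independent of the case analysis in Proposition \ref{m11}. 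The price is the spurious solution $(\bar{g},a,b)=(0,2,3)$, which you correctly kill via Dickson's classification (no semidihedral $2$-subgroups of $\PGL(2,K)$ in odd characteristic); your explicit branch-point computation at the end then replaces the citation of \cite{shaska} by essentially reproving the relevant special case. One step is stated too quickly: asserting that the hyperelliptic involution $\tau$ ``coincides with the unique involution of $Z(SD_{16})$'' presupposes $\tau\in SD_{16}$. This is true but needs one more line: since $\tau$ is central in $\aut(\bar{\cX})$, if $\tau\notin SD_{16}$ then $\langle SD_{16},\tau\rangle/\langle\tau\rangle\cong SD_{16}$ would embed into $\aut(\bar{\cX})/\langle\tau\rangle\le\PGL(2,K)$, contradicting the same Dickson argument you already used to exclude $\bar{g}=0$. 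With that sentence added, the proof is complete.
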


\begin{proof}
By the proof of Proposition \ref{m11}, $\rm{M}_{11}$ can have only two short orbits on $\cX$: a non-tame orbit $O_1$, formed by $110$ points, each with ramification index $72$ and a tame orbit $O_2$ consisting of 720 points each  with ramification index $11$.
Recall that $E$ is an elementary abelian $3$-group of order $9$. 
There are exactly $55$ Sylow $3$-subgroups in $\rm{M}_{11}$. 

The ramified points  of $\cX \rightarrow \cX/E$ are contained in $O_1$. Moreover, $O_1$ can be partitioned into short orbits of the Sylow $3$-subgroups. A counting argument shows that each Sylow $3$-subgroup fixes two points,   $P_1, P_2$.

 For $j = 1,2$,  we have $|E^{(0)}_{P_j}| = |E^{(1)}_{P_j}| = 9$. 
Hence, the Hurwitz genus formula yields $g(\cX/E )= 2$.  
As the normalizer of $E$ in $\rm{M}_{11}$ is the semidirect product $E \rtimes SD_{16}$, then $\cX/E$ has an automorphism group isomorphic to $SD_{16}$.

In \cite[Section 3.2]{shaska}, it is shown that over a field of characteristic $p \neq 2$, there is up to isomorphism only one curve with such an automorphism group, which is isomorphic to $\mathcal{Y}$. The full automorphism group of $\mathcal{Y}$ is $\tilde{S}_4$, a double cover of $S_4$.  
\end{proof}


We end this section with an open question. 

\begin{question}
Is $X(11)$ the only ordinary genus $26$ curve in characteristic $3$ admitting $\rm{M}_{11}$ as an automorphism group up to isomorphism?  
\end{question}

\subsection*{Acknowledgments}
 This research was performed within the activities of  GNSAGA - Gruppo Nazionale per le Strutture Algebriche, Geometriche e le loro Applicazioni of Italian INdAM. The second author  was supported by FAPESP-Brazil, grant 2017/18776-6.

\end{document}